\newtheorem{thm}{Theorem}[section]
\newtheorem{cor}[thm]{Corollary}
\newtheorem{lem}[thm]{Lemma}
\newtheorem{prop}[thm]{Proposition}
\theoremstyle{definition}
\newtheorem{defn}[thm]{Definition}
\newtheorem{rem}[thm]{Remark}
\newtheorem*{defn*}{Definition}
\newtheorem*{rems*}{Remarks}
\newtheorem*{rem*}{Remark}
\numberwithin{equation}{section}
\newcommand{\Eq}{{\text{E}}}
\newcommand{\M}{{M}}
\newcommand{\Css}{{\text{CSS}}}
\begin{document}

\title[Singular points of affine $\lambda$--equidistants] {Singular points of the Wigner caustic and affine equidistants of planar curves}
\author{Wojciech Domitrz, Micha\l{} Zwierzy\'nski}
\address{Faculty of Mathematics and Information Science\\
Warsaw University of Technology\\
ul. Koszykowa 75, 00-662 Warszawa\\
Poland
\\}

\email{domitrz@mini.pw.edu.pl, zwierzynskim@mini.pw.edu.pl}
\thanks{The work of W. Domitrz and M. Zwierzy\'nski was partially supported by NCN grant no. DEC-2013/11/B/ST1/03080. }

\subjclass[2010]{53A04, 53A15, 58K05, 81Q20.}

\keywords{Wigner caustic, affine equidistants, singularities, planar curves, semiclassical dynamics}

\begin{abstract}
In this paper we study singular points of the Wigner caustic and affine $\lambda$--equidistants of planar curves based on shapes of these curves. We generalize the Blaschke--S\"uss theorem on the existence of antipodal pairs of a~convex curve.
\end{abstract}

\maketitle

\section{Introduction}

The famous Blaschke-S\"uss theorem states that there are at least three pairs of antipodal pairs on an oval (\cite{G1, L1}). We recall that a pair of points on an oval is an antipodal pair if the tangent lines are parallel at these points and curvatures are equal. The Wigner caustic is a locus of midpoints of chords connecting points on a curve with parallel tangent lines (\cite{B1, DMR1, DR1, DZ-geometry, AH1}). The singular points of the Wigner caustic of an oval come exactly from the antipodal pairs. The Wigner caustic was first considered by M. Berry in \cite{B1} on the study of Wigner's phase-space representation of quantum states. Recently many interesting properties of the Wigner caustic were investigated. For instance the typical behavior of convex bodies in $\mathbb{R}^2$ were studied by R. Schneider using the middle hedgehog (\cite{S2, S3}) which is a natural generalization of the Wigner caustic for non--smooth ovals. Furthermore the absolute value of the oriented area of the Wigner caustic improves the classical isoperimetric inequality for ovals (\cite{Z2, Z3}) and improves the isoperimetric defect in the reverse isoperimetric inequality (\cite{CGR-Hurwitz-ineq}). Let us also notice the use of this set in the affine geometry. It leads to the construction of bi-dimensional improper affine spheres (\cite{CDR1}). The Wigner caustic is an example of an affine $\lambda$--equidistant, which is the locus of points dividing chords connecting points on $\M$ with parallel tangent lines in a fixed ratio $\lambda$ (\cite{DJRR1, DRR1, DZ-geometry, Z1}). 

In this paper we study singular points of Wigner caustic and affine equidistants of a planar curve. We prove theorems on existence of singular points based on the shape of the curve. In particular we find a generalization of the Blaschke-S\"uss theorem (see Theorem \ref{CorWCLoop}).

In Section \ref{SecGeneralProperties} we briefly sketch the known results on the Wigner caustic and affine equidistants. Then we proceed with the study of tangent lines and curvatures of affine equidistants and thanks to it we obtain many global results concerning properties of affine equidistants. Among other things we derive a formula for the number of their inflexion points, we prove that it is possible to reconstruct the original convex curve from any affine equidistants except the Wigner caustic and we show that any equidistant, except the Wigner caustic, (for a generic $\lambda$) of a generic closed convex curve has an even number of cusp singularities.

In Section \ref{SecExistence} theorems on existence of singularities of affine equidistants and the Wigner caustic are proved and some applications are indicated.

The methods used in this paper can be also applied to study singular points of the secant caustic (\cite{DRZ}).


\section{Properties of the Wigner caustic and affine equidistants}\label{SecGeneralProperties}

A smooth parametrized curve $M$ on the affine plane $\mathbb{R}^2$ is the image of $C^{\infty}$ smooth map $I\to\mathbb{R}^2$, where $I$ is an open interval in $\mathbb{R}$. The image of $C^{\infty}$ map $S^1\to\mathbb{R}^2$ is called a smooth \textit{closed} curve.  If the velocity of a smooth curve does not vanish, then the curve is called \textit{regular}. A curve with no intersection points is called \textit{simple}. A \textit{convex} curve is a regular simple closed curve if its curvature does not vanish. If there exists $\varepsilon>0$ such that $f\big((s_0-\varepsilon, s_0+\varepsilon)\big)$ is a $C^k$ smooth $1$ - dimensional manifold, where $k=1, 2, \ldots$ or $\infty$ and $(s_1,s_2)\ni s\mapsto f(s)\in\mathbb{R}^2$ is a parameterization of $M$, then a point $f(s_0)$ is called a \textit{$C^k$ regular point} of $M$. If a point $f(s_0)$ is not a $C^k$ regular for any $k>0$, then we call it a \textit{singular point}. A curve is called \textit{singular} if it has a singular point.


\begin{defn}\label{parallelpair}
A pair of different points $a, b$ in $\M$ is called a \textit{parallel pair} if the lines $T_aM$ and $T_bM$ are parallel.
\end{defn}

\begin{defn}\label{equidistantSet}
An \textit{affine} $\lambda$\textit{--equidistant} is the following set
$$\Eq_{\lambda}(\M)=\left\{\lambda a+(1-\lambda)b\ \big|\ a,b \text{ is a parallel pair of } \M\right\}.$$

The \textit{Wigner caustic} of $\M$ is $\Eq_{\frac{1}{2}}(\M)$.
\end{defn}

\begin{defn}
The \textit{Centre Symmetry Set} of $\M$ ($\Css(\M)$) is the envelope of lines joining parallel pairs of $\M$.
\end{defn}

Let us briefly sketch the known results on the geometry of affine $\lambda$--equidistants and the Centre Symmetry Set (see Figure \ref{FigCssEq}. Let $\M$ be a generic convex curve. Then $\Css(\M)$, $E_{\frac{1}{2}}(M)$ and $\Eq_{\lambda}(\M)$ for a generic $\lambda$ are smooth closed curves which can have only cusp singularities (\cite{B1, GH1, GZ1, J1}). Furthermore regular parts of $\Css(\M)$ are formed by cusp singularities of $\Eq_{\lambda}(\M)$ (\cite{GZ1}). Both: the number of cusp singularities of $E_{\frac{1}{2}}(M)$ and the number of cusp singularities of $\Css(M)$, are odd and not smaller than $3$ (\cite{B1, GH1, G1}) and the number of cusp singularities of $E_{\frac{1}{2}}(M)$ is not greater than the number of cusp singularities of $\Css(M)$. We will show that the number of cusps of $\Eq_{\lambda}(\M)$ for a generic $\displaystyle\lambda\neq\frac{1}{2}$ is even (Theorem \ref{PropCuspsEq}). Bifurcations of affine $\lambda$--equidistants were studied in details in \cite{GR1, GWZ1, JJR1}. The global geometry of the Wigner caustic and other affine $\lambda$--equidistants were studied in \cite{DZ-geometry}. The geometry of an affine extended wave front, i.e. the set $\bigcup_{\lambda\in[0,1]}\{\lambda\}\times E_{\lambda}(M)$ was studied in \cite{DZ-GBtheorem}. Let $p$ be a inflexion point of $\M$. Then $\Css(\M)$ is tangent to this inflexion point and has an endpoint there. The set $\Eq_{\lambda}(\M)$ for $\displaystyle\lambda\neq \frac{1}{2}$ has an inflexion point at $p$ (as the limit point) and is tangent to $\M$ at $p$. The Wigner caustic is tangent to $\M$ at $p$ too and it has an endpoint there. The Wigner caustic and the Centre Symmetry Set approach $p$ from opposite sides (\cite{B1, DMR1, GH1, GWZ1}). The geometry of this branch of the Wigner caustic, which is called the Wigner caustic on shell, is studied in \cite{DZ-geometry}.

\begin{figure}[h]
\centering
\includegraphics[scale=0.23]{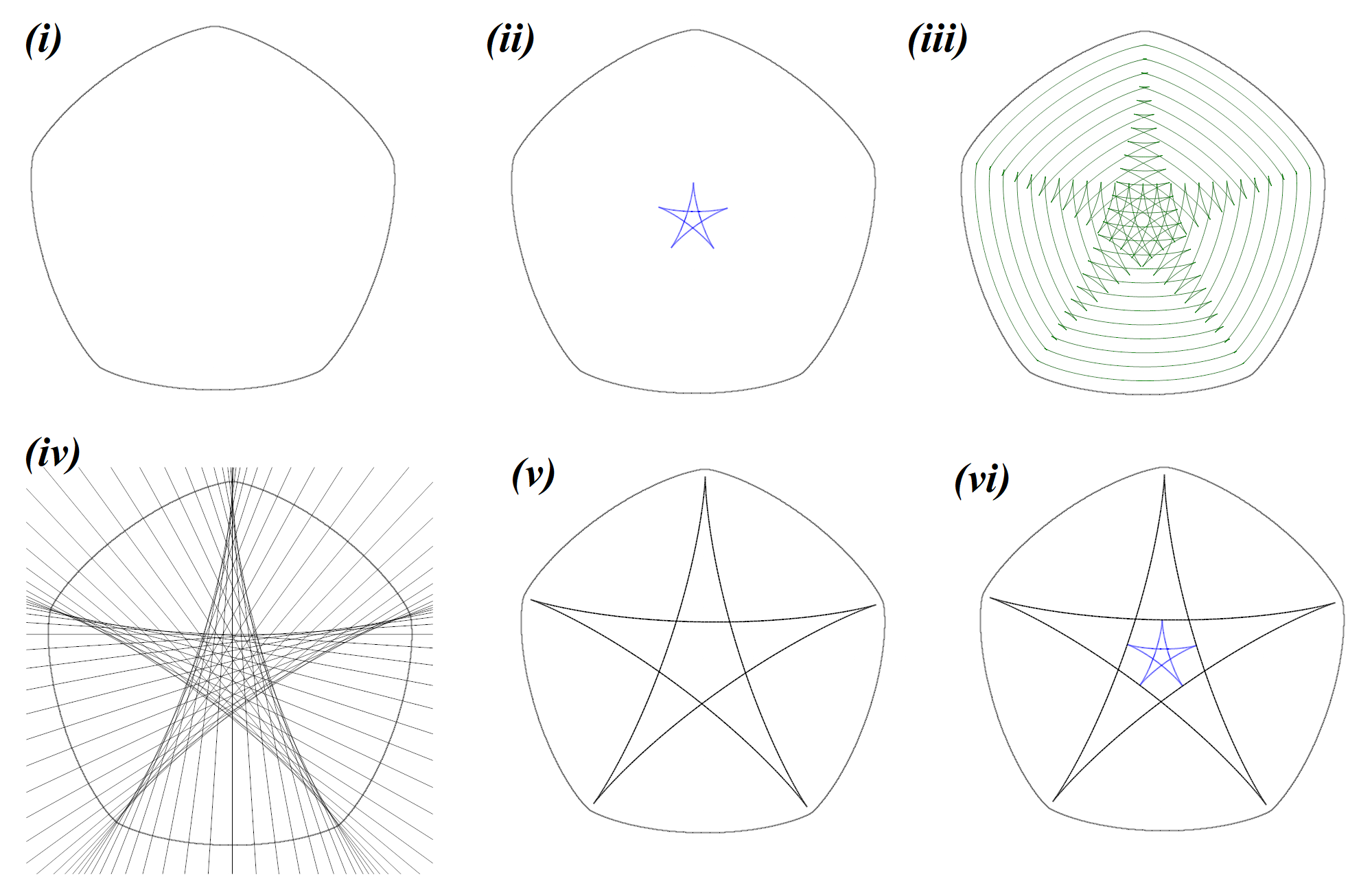}
\caption{(i) A curve $M$, (ii) $E_{\frac{1}{2}}(M)$, (iii) $E_{\lambda}(M)$ for $\displaystyle\lambda=\frac{k}{26}$ for $k=1,2,\ldots,13$, (iv) lines joining parallel pairs of $M$, (v) $\Css(M)$, (vi) $E_{\frac{1}{2}}(M)$ and $\Css(M)$.}
\label{FigCssEq}
\end{figure}

Let us denote by $\kappa_{\M}(p)$ the signed curvature of $\M$ at $p$.

If $a,b$ is a parallel pair of $\M$ such that local parameterizations of $\M$ nearby points $a$ and $b$ are in opposite directions and $\kappa_{\M}(a)+\kappa_{\M}(b)\neq 0$, then the point $\displaystyle \frac{\kappa_{\M}(a)\cdot a+\kappa_{\M}(b)\cdot b}{\kappa_{\M}(a)+\kappa_{\M}(b)}$ belongs to $\Css(\M)$ (\cite{GH1}).

By direct calculations we get the following lemma.

\begin{lem}\label{LemParallelCurvature}
Let $\M$ be a closed regular curve. Let $a,b$ be a parallel pair of $\M$, such that $\M$ is parameterized at $a$ and $b$ in opposite directions and $\kappa_{\M}(b)\neq 0$. Let $p=\lambda a+(1-\lambda)b$ be a regular point of $\Eq_{\lambda}(\M)$ and let $\displaystyle q=\frac{\kappa_{\M}(a)\cdot a+\kappa_{\M}(b)\cdot b}{\kappa_{\M}(a)+\kappa_{\M}(b)}$ be a regular point of $\Css(\M)$.

Then 
\begin{enumerate}[(i)]
\item the tangent line to $\Eq_{\lambda}(\M)$ at $p$ is parallel to the tangent lines to $\M$ at $a$ and $b$.

\item the curvature of $\Eq_{\lambda}(\M)$ at $p$ is equal to

$$\kappa_{\Eq_{\lambda}(\M)}(p)=\frac{\kappa_{\M}(a)|\kappa_{\M}(b)|}{\left|\lambda\kappa_{\M}(b)-(1-\lambda)\kappa_{\M}(a)\right|}.$$

\item the curvature of $\Css(\M)$ at $q$ is equal to

$$\kappa_{\Css(\M)}(q)=\mbox{sgn}(\kappa_{\M}(b))\cdot \frac{\displaystyle (\kappa_{\M}(a)+\kappa_{\M}(b))^3}{\displaystyle \left|\kappa_{\M}^2(b)\kappa'_{\M}(a)-\kappa_{\M}^2(a)\kappa'_{\M}(b)\right|}\cdot\frac{\displaystyle \det\left(a-b,\mathbbm{t}(a)\right)}{|a-b|^3},$$
where $\mathbbm{t}(a)$ is a tangent unit vector field compatible with the orientation of $\M$ at $a$.
\end{enumerate}
\end{lem}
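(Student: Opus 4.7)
The plan is to parametrize both local branches of $\M$ near $a$ and $b$ by the same variable $\theta$, namely the angle of the tangent direction at $a$. Since the two branches have parallel tangents but opposite orientations, the tangent at $b$ has angle $\theta+\pi$; writing $\mathbbm{t}(a)=(\cos\theta,\sin\theta)$ and using the standard relation $d\theta/ds=\kappa_{\M}$ along each branch yields
$$a'(\theta)=\frac{1}{\kappa_{\M}(a)}\,\mathbbm{t}(a),\qquad b'(\theta)=-\frac{1}{\kappa_{\M}(b)}\,\mathbbm{t}(a).$$
Differentiating $p(\theta)=\lambda a(\theta)+(1-\lambda)b(\theta)$ then gives $p'(\theta)=\bigl(\frac{\lambda}{\kappa_{\M}(a)}-\frac{1-\lambda}{\kappa_{\M}(b)}\bigr)\mathbbm{t}(a)$, which is visibly parallel to the tangents of $\M$ at $a$ and $b$; this proves (i). For (ii), $p'(\theta)$ is a scalar multiple of the unit vector $\mathbbm{t}(a)$, which rotates at unit speed as $\theta$ varies. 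Hence the arclength element of $\Eq_{\lambda}(\M)$ is $ds_p=|p'(\theta)|\,d\theta$ while the tangent angle changes at unit rate, so the signed curvature equals $\pm 1/|p'(\theta)|$ with a sign dictated by orientation; simplifying the resulting expression gives the stated formula.

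For (iii), write $q(\theta)=(1-u(\theta))a(\theta)+u(\theta)b(\theta)$ with $u=\kappa_{\M}(b)/(\kappa_{\M}(a)+\kappa_{\M}(b))$. The crucial observation is that this weighting has been chosen precisely so that
$$(1-u)\,a'(\theta)+u\,b'(\theta)=\frac{\mathbbm{t}(a)}{\kappa_{\M}(a)+\kappa_{\M}(b)}-\frac{\mathbbm{t}(a)}{\kappa_{\M}(a)+\kappa_{\M}(b)}=0,$$
reducing $q'(\theta)$ to $u'(\theta)(b-a)$; this recovers the familiar fact that $\Css(\M)$ is tangent to the chord through $a$ and $b$. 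Differentiating once more gives $q''=u''(b-a)+u'(b'-a')$, whose $u''$-term is parallel to $q'$ and drops out of the determinant, so $\det(q',q'')=(u')^2\det(b-a,b'-a')$. Substituting $b'-a'=-\frac{\kappa_{\M}(a)+\kappa_{\M}(b)}{\kappa_{\M}(a)\kappa_{\M}(b)}\mathbbm{t}(a)$ turns this into a multiple of $\det(a-b,\mathbbm{t}(a))$, while the chain rule $\tfrac{d}{d\theta}\kappa_{\M}(a)=\kappa'_{\M}(a)/\kappa_{\M}(a)$ (and the analogue for $b$) yields
$$u'=\frac{\kappa_{\M}^{2}(a)\kappa'_{\M}(b)-\kappa_{\M}^{2}(b)\kappa'_{\M}(a)}{\kappa_{\M}(a)\kappa_{\M}(b)\bigl(\kappa_{\M}(a)+\kappa_{\M}(b)\bigr)^{2}}.$$
Plugging everything into $\kappa_{\Css(\M)}(q)=\det(q',q'')/|q'|^{3}$ and cleaning up produces the claim.

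The main obstacle will be keeping track of signs in (iii), in particular reconciling the factor $\mathrm{sgn}(\kappa_{\M}(b))$ with the orientation that the $\theta$-parametrization induces on $\Css(\M)$ and verifying that the absolute values assemble exactly as stated. Apart from this bookkeeping (and the analogous sign comment in (ii)), all three parts reduce to routine chain-rule manipulations once the common parametrization by the tangent angle $\theta$ is in place.
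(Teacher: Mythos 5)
The paper offers no written proof of this lemma---it is introduced only with ``By direct calculations we get the following lemma''---and your tangent-angle parametrization is exactly that direct calculation carried out: the identities $a'(\theta)=\kappa_{\M}^{-1}(a)\,\mathbbm{t}(a)$ and $b'(\theta)=-\kappa_{\M}^{-1}(b)\,\mathbbm{t}(a)$, the cancellation $(1-u)a'+ub'=0$, and your expression for $u'$ all check out and assemble into the stated formulas. The one loose end is the one you flag yourself: with the orientation your $\theta$-parametrization induces, the formula $\det(q',q'')/|q'|^{3}$ returns the prefactor $\mathrm{sgn}\!\left(\kappa_{\M}(a)\kappa_{\M}(b)\right)$ rather than $\mathrm{sgn}\!\left(\kappa_{\M}(b)\right)$ in (iii) (and an analogous discrepancy arises in (ii)), but this traces to the paper's unstated orientation convention on $\Css(\M)$ rather than to any error in your computation, and the magnitudes---which are what the later propositions actually use---are correct.
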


Lemma \ref{LemParallelCurvature}(ii)--(iii) implies the following propositions.

\begin{prop}\label{PropSingularPointOfEq} \cite{GZ1}
Let $a,b$ be a parallel pair of a generic regular curve $\M$, such that $\M$ is parameterized at $a$ and $b$ in opposite directions and $\kappa_{\M}(b)\neq 0$. Then the point $\lambda a+(1-\lambda)b$ is a singular point of $\Eq_{\lambda}(\M)$ if and only if $\displaystyle\frac{\kappa_{\M}(a)}{\kappa_{\M}(b)}=\frac{\lambda}{1-\lambda}$.
\end{prop}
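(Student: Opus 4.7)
My plan is to prove the proposition by parameterizing $\Eq_\lambda(\M)$ locally near the parallel pair $(a,b)$ using the tangent angle as a parameter and reading off the singular locus from the velocity vector. Since $\M$ is generic we may assume $\kappa_\M(a)\neq 0$ in addition to the hypothesis $\kappa_\M(b)\neq 0$, so the common tangent angle $\theta$ can serve as a local parameter on both branches of $\M$. Let $\alpha(\theta)$ parameterize a neighborhood of $a$ on $\M$ with the chosen orientation, and let $\beta(\theta)$ parameterize a neighborhood of $b$ with the opposite orientation, arranged so that the unit tangent at $\alpha(\theta)$ equals $T(\theta):=(\cos\theta,\sin\theta)$ and the unit tangent at $\beta(\theta)$ equals $-T(\theta)$. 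The Frenet relation $d\theta/ds=\kappa$ applied to each branch then yields $\alpha'(\theta)=T(\theta)/\kappa_\M(\alpha(\theta))$ and $\beta'(\theta)=-T(\theta)/\kappa_\M(\beta(\theta))$.

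With this setup in hand, the map $\Phi(\theta):=\lambda\alpha(\theta)+(1-\lambda)\beta(\theta)$ is a local parameterization of $\Eq_\lambda(\M)$ near $p=\lambda a+(1-\lambda)b$, and a direct computation gives
$$\Phi'(\theta)=\frac{\lambda\,\kappa_\M(\beta(\theta))-(1-\lambda)\,\kappa_\M(\alpha(\theta))}{\kappa_\M(\alpha(\theta))\,\kappa_\M(\beta(\theta))}\,T(\theta).$$
The velocity $\Phi'$ vanishes at the distinguished parallel pair precisely when $\lambda\kappa_\M(b)=(1-\lambda)\kappa_\M(a)$, which rearranges to the advertised ratio $\kappa_\M(a)/\kappa_\M(b)=\lambda/(1-\lambda)$. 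As a consistency check with Lemma \ref{LemParallelCurvature}(ii), the modulus of the scalar prefactor above is, up to the factor $|\kappa_\M(a)|$, the reciprocal of the curvature formula supplied by the lemma, so the lemma's denominator blows up on precisely the same locus at which $\Phi'$ vanishes.

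The step I expect to require the most care is the equivalence between vanishing of $\Phi'$ and the point $p$ being a singular point of the set $\Eq_\lambda(\M)$. When $\Phi'(\theta_0)\neq 0$, $\Phi$ is an immersion and $p$ is $C^\infty$-smooth on $\Eq_\lambda(\M)$, matching the finite curvature value of Lemma \ref{LemParallelCurvature}(ii). When $\Phi'(\theta_0)=0$, one invokes the genericity of $\M$ to ensure that the scalar prefactor has a simple zero in $\theta$, so that $\Phi$ has an $A_2$-type singularity and $\Eq_\lambda(\M)$ exhibits a genuine cusp at $p$ rather than a removable singularity of the parameterization. This last nondegeneracy assertion is the only delicate point and follows from a standard transversality argument on the space of parallel pairs in the spirit of \cite{GZ1}.
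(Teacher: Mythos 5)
Your argument is essentially the paper's: the paper deduces this proposition directly from Lemma \ref{LemParallelCurvature}(ii) (itself obtained ``by direct calculations''), whose denominator $\left|\lambda\kappa_{\M}(b)-(1-\lambda)\kappa_{\M}(a)\right|$ vanishes exactly on your locus, and your velocity computation is that same calculation carried out one step earlier. The only point to repair is your claim that genericity lets you assume $\kappa_{\M}(a)\neq 0$ --- generic curves do have inflexion points, and the hypothesis only excludes $\kappa_{\M}(b)=0$ --- but this is harmless: replacing the tangent angle by arc length on the branch through $a$ and using $t'(s)=\kappa_{\M}(a)/\kappa_{\M}(b)$ from the implicit function theorem (exactly as the paper does in the proof of Proposition \ref{ThmSingCurv1}) yields the same velocity formula without that restriction.
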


\begin{prop}\cite{DZ-geometry}\label{PropInflOfEq}
Let $a, b$ be a parallel pair of a generic regular closed curve $\M$ and let $\lambda\neq 0, 1$. Then $\lambda a+(1-\lambda)b$ and $(1-\lambda)a+\lambda b$ are inflexion points of $\Eq_{\lambda}(\M)$ if and only if one of the points $a$, $b$ is an inflexion point of $\M$.
\end{prop}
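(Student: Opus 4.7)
The plan is to apply Lemma~\ref{LemParallelCurvature}(ii) twice: once to the parallel pair $(a,b)$ and once to $(b,a)$, since both correspond to parallel pairs of $\M$ parameterized in opposite directions at the two points. This yields
\[
\kappa_{\Eq_\lambda(\M)}(\lambda a+(1-\lambda)b)=\frac{\kappa_{\M}(a)\,|\kappa_{\M}(b)|}{|\lambda\kappa_{\M}(b)-(1-\lambda)\kappa_{\M}(a)|}
\]
and
\[
\kappa_{\Eq_\lambda(\M)}((1-\lambda)a+\lambda b)=\frac{\kappa_{\M}(b)\,|\kappa_{\M}(a)|}{|\lambda\kappa_{\M}(a)-(1-\lambda)\kappa_{\M}(b)|}.
\]
The key observation is that, up to absolute values, both numerators are proportional to the product $\kappa_{\M}(a)\kappa_{\M}(b)$, while the denominators involve the nondegenerate combination appearing in Proposition~\ref{PropSingularPointOfEq}.

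For the ``$\Leftarrow$'' direction, assume without loss of generality that $a$ is an inflexion point of $\M$, so $\kappa_{\M}(a)=0$. For a generic closed regular curve the parallel point $b$ is then not itself an inflexion point, so $\kappa_{\M}(b)\neq 0$; since $\lambda\neq 0,1$, both denominators reduce to nonzero quantities and both curvatures above vanish. Furthermore, genericity ensures that $\kappa_{\M}$ has a simple zero at $a$, so as the parallel pair $(a(s),b(s))$ moves through this configuration, each of the two curvatures of $\Eq_\lambda(\M)$ crosses zero transversally and changes sign, which is precisely the definition of an inflexion point. For the ``$\Rightarrow$'' direction it suffices to note that if either of the two points in the statement is an inflexion point of $\Eq_\lambda(\M)$, then the corresponding curvature vanishes, and the formula then forces $\kappa_{\M}(a)\kappa_{\M}(b)=0$.

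The main technical issue I would need to address is that a priori the curvature formulas could produce the indeterminate form $0/0$ if both numerator and denominator vanished simultaneously. By Proposition~\ref{PropSingularPointOfEq}, a zero of the denominator corresponds to a cusp singularity of $\Eq_\lambda(\M)$, i.e.\ the condition $\kappa_{\M}(a)/\kappa_{\M}(b)=\lambda/(1-\lambda)$. For $\lambda\neq 0,1$ this is incompatible with the inflexion condition $\kappa_{\M}(a)\kappa_{\M}(b)=0$, so on the generic locus the two formulas above remain well-defined and the argument goes through cleanly.
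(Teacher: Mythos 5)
Your proposal is correct and is essentially the paper's own argument: the paper presents Proposition~\ref{PropInflOfEq} (citing the reference) as a direct consequence of the curvature formula in Lemma~\ref{LemParallelCurvature}(ii), applied to the pair $(a,b)$ and to the swapped pair $(b,a)$, exactly as you do. The one point to tighten is that the swapped instance of the formula is stated under the hypothesis that the curvature at the second point of the pair is nonzero, so when $\kappa_{\M}(a)=0$ the vanishing of $\kappa_{\Eq_{\lambda}(\M)}$ at $(1-\lambda)a+\lambda b$ should be obtained by continuity from nearby parallel pairs (the point is regular there since the denominator $\left|\lambda\kappa_{\M}(a)-(1-\lambda)\kappa_{\M}(b)\right|$ does not vanish) rather than by literal substitution into the formula.
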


\begin{cor}\label{CorConvexNoInf}
If $\M$ is a convex curve, then $\Eq_{\lambda}(\M)$ for $\lambda\in\mathbb{R}$ and $\Css(\M)$ have no inflexion points.
\end{cor}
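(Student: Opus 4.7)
The plan is to derive both claims by plugging convexity into the curvature formulas of Lemma \ref{LemParallelCurvature} and checking that every factor that needs to be nonzero remains nonzero.

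For the affine equidistants I would first treat $\lambda \neq 0, 1$ via Lemma \ref{LemParallelCurvature}(ii). Since $\M$ is convex, $\kappa_{\M}$ is nowhere zero and $\kappa_{\M}(a)|\kappa_{\M}(b)| \neq 0$ at every parallel pair. At a regular point $p$ of $\Eq_{\lambda}(\M)$ the denominator $|\lambda\kappa_{\M}(b)-(1-\lambda)\kappa_{\M}(a)|$ is automatically positive --- its vanishing would force $p$ to be singular by Proposition \ref{PropSingularPointOfEq}. Therefore $\kappa_{\Eq_{\lambda}(\M)}(p)>0$ at every regular point, which forbids an inflexion. The boundary cases $\lambda=0,1$ are trivial, since $\Eq_{\lambda}(\M)$ then coincides set-theoretically with $\M$, a convex curve.

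For the Centre Symmetry Set I would apply Lemma \ref{LemParallelCurvature}(iii) and check the resulting expression factor by factor. Constant sign of $\kappa_{\M}$ yields $(\kappa_{\M}(a)+\kappa_{\M}(b))^3\neq 0$; regularity of $q\in\Css(\M)$ yields the nonvanishing of $|\kappa_{\M}^2(b)\kappa'_{\M}(a)-\kappa_{\M}^2(a)\kappa'_{\M}(b)|$; and $|a-b|^3$ is plainly nonzero. The only delicate factor is $\det(a-b,\mathbbm{t}(a))$: were it zero, then $a-b$ would be parallel to $\mathbbm{t}(a)$, and since $\mathbbm{t}(a)\parallel\mathbbm{t}(b)$ this would force the tangent lines to $\M$ at $a$ and at $b$ to coincide. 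But a convex curve admits no bitangent line --- a tangent line to a convex curve is a supporting line meeting the curve only at the point of tangency --- so this is impossible. Hence $\kappa_{\Css(\M)}(q)\neq 0$ at every regular point and $\Css(\M)$ has no inflexion point.

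The main obstacle, if it can be called one, is the bitangent observation used to rule out $\det(a-b,\mathbbm{t}(a))=0$; this is just the standard fact that a convex curve cannot meet any one of its tangent lines at two different points. Once this is noted, both claims are read off directly from the formulas of Lemma \ref{LemParallelCurvature}.
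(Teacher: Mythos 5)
Your proposal is correct and follows essentially the same route as the paper: the corollary is read off from the curvature formulas of Lemma \ref{LemParallelCurvature}(ii)--(iii) (the equidistant half being exactly the content of Proposition \ref{PropInflOfEq} specialized to a curve with nowhere-vanishing curvature). Your explicit check that $\det(a-b,\mathbbm{t}(a))\neq 0$ because a convex curve has no double tangent line is the one detail the paper leaves implicit, and it is the right justification.
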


Let $\tau_{p}$ denote the translation by a vector $p\in\mathbb{R}^2$.

\begin{defn}
A curve $\M$ is \textit{curved in the same side at $a$ and $b$} (resp. \textit{curved in the different sides}), where $a, b$ is a parallel pair of $\M$, if the center of curvature of $\M$ at $a$ and the center of curvature of  $\tau_{a-b}(\M)$ at $a=\tau_{a-b}(b)$ lie on the same side (resp. on the different sides) of the tangent line to $\M$ at $a$.
\end{defn}

In Figure \ref{FigCurved}(i) we illustrate a curve curved in the same side at a parallel pair $a,b$ and in Figure \ref{FigCurved}(ii) we illustrate a curve curved in the different sides at a parallel pair $a,b$.

\begin{figure}[h]
\centering
\includegraphics[scale=0.5]{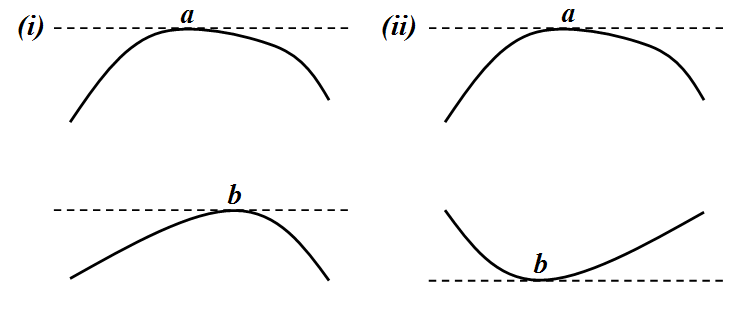}
\caption{}
\label{FigCurved}
\end{figure}

\begin{prop}\cite{DZ-geometry}\label{PropRegularPointOfEq}
\begin{enumerate}[(i)]
\item If $\M$ is curved in the same side at a parallel pair $a,b$, then $\lambda a+(1-\lambda)b$ is a $C^{\infty}$ regular point of $\Eq_{\lambda}(\M)$ for $\lambda\in(0,1)$.
\item If $\M$ is curved in the different sides at a parallel pair $a,b$, then $\lambda a+(1-\lambda)b$ is a $C^{\infty}$ regular point of $\Eq_{\lambda}(\M)$ for $\lambda\in(-\infty,0)\cup(1,\infty)$.
\end{enumerate}
\end{prop}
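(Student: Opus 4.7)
The plan is to work locally near the parallel pair $a, b$, using the tangent direction as a common parameter. Because the hypothesis ``curved in the same/different sides'' refers to centers of curvature at $a$ and $b$, both signed curvatures $\kappa_{\M}(a), \kappa_{\M}(b)$ are automatically nonzero, so on a neighborhood of each of $a$ and $b$ we may reparameterize $\M$ by its tangent angle $\theta$, obtaining smooth local parameterizations $f(\theta), g(\theta)$ with $f(\theta_0)=a$, $g(\theta_0)=b$, chosen so that $f(\theta), g(\theta)$ is a parallel pair for every $\theta$ near $\theta_0$. Then $\Eq_{\lambda}(\M)$ is locally the image of the smooth map $E(\theta)=\lambda f(\theta)+(1-\lambda)g(\theta)$, and the target reduces to showing $E'(\theta_0)\ne 0$: once this is known, $E$ is a local $C^{\infty}$ embedding by the inverse function theorem, and so $\lambda a+(1-\lambda)b$ is a $C^{\infty}$ regular point of $\Eq_{\lambda}(\M)$.

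A short derivative computation in the tangent-angle parameterization gives $f'(\theta_0)=(1/\kappa_{\M}(a))\mathbbm{t}(a)$ and $g'(\theta_0)=\mp(1/\kappa_{\M}(b))\mathbbm{t}(a)$, where the minus sign corresponds to $\M$ being parameterized in opposite directions at $a$ and $b$ and the plus sign to the same direction. Consequently
\[ E'(\theta_0) \;=\; \left(\frac{\lambda}{\kappa_{\M}(a)}\mp\frac{1-\lambda}{\kappa_{\M}(b)}\right)\mathbbm{t}(a), \]
so $E(\theta_0)=\lambda a+(1-\lambda)b$ is singular precisely when $\kappa_{\M}(a)/\kappa_{\M}(b)=\pm\lambda/(1-\lambda)$; the minus-sign version recovers Proposition \ref{PropSingularPointOfEq}.

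The core step is to translate the geometric hypothesis into a sign condition on $\kappa_{\M}(a)\kappa_{\M}(b)$. Writing the center of curvature at $a$ as $a+(1/\kappa_{\M}(a))\nu$ and the center of curvature of $\tau_{a-b}(\M)$ at $a$ as $a\pm(1/\kappa_{\M}(b))\nu$, where $\nu$ is a unit normal to the tangent line to $\M$ at $a$ and the $\pm$ reflects whether the orientations of $\M$ at $a$ and $b$ disagree or agree, a comparison of the $\nu$-coefficients gives: in the opposite-orientation case, ``curved in the same side'' $\Leftrightarrow \kappa_{\M}(a)\kappa_{\M}(b)<0$ and ``curved in the different sides'' $\Leftrightarrow \kappa_{\M}(a)\kappa_{\M}(b)>0$; these inequalities swap in the same-orientation case. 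Comparing with the singular condition $\kappa_{\M}(a)/\kappa_{\M}(b)=\pm\lambda/(1-\lambda)$, a short case-by-case check shows that in every orientation configuration the ``same side'' hypothesis forces a sign of the right-hand side inconsistent with $\lambda\in(0,1)$, proving (i), while the ``different sides'' hypothesis forces a sign inconsistent with $\lambda\in(-\infty,0)\cup(1,\infty)$, proving (ii).

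The main obstacle I expect is the careful sign bookkeeping across the two possible orientation configurations of the parallel pair; once a single convention is fixed (for instance, always referring everything to $\mathbbm{t}(a)$ and its $90^{\circ}$ rotate), the argument reduces to a small case analysis, and the passage from $E'(\theta_0)\ne 0$ to the $C^{\infty}$ regularity of $\Eq_{\lambda}(\M)$ at $\lambda a+(1-\lambda)b$ is a direct application of the inverse function theorem.
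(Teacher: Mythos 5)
Your argument is correct, and it is essentially the argument the paper's framework suggests: the paper itself gives no proof of this proposition (it is imported by citation from \cite{DZ-geometry}), but Lemma \ref{LemParallelCurvature} and Proposition \ref{PropSingularPointOfEq} already encode the key fact that, for a parallel pair parameterized in opposite directions, $\lambda a+(1-\lambda)b$ is singular precisely when $\kappa_{\M}(a)/\kappa_{\M}(b)=\lambda/(1-\lambda)$. Your tangent--angle parameterization rederives this criterion cleanly (the hypothesis on centers of curvature forces $\kappa_{\M}(a)\neq 0\neq\kappa_{\M}(b)$, so the reparameterization and the local parallel-pair correspondence $\theta\mapsto(f(\theta),g(\theta))$ are legitimate), and the translation of ``curved in the same/different sides'' into the sign of $\kappa_{\M}(a)\kappa_{\M}(b)$ is right in both orientation configurations: with opposite orientations the translated centre of curvature is $a-\nu(a)/\kappa_{\M}(b)$, giving ``same side'' $\Leftrightarrow\kappa_{\M}(a)\kappa_{\M}(b)<0$, and the signs swap for equal orientations. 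The resulting four-case sign check does rule out the singularity condition exactly on the stated ranges of $\lambda$. Two things you add beyond what the paper records explicitly: you treat the same-orientation case, which Proposition \ref{PropSingularPointOfEq} does not cover but which the statement of the proposition requires; and you make explicit that $E'(\theta_0)\neq 0$ yields a local embedding, which is what the paper's definition of a $C^{\infty}$ regular point (local branch is a smooth $1$--manifold) asks for. The only stylistic caution is the ambiguity of ``the minus-sign version'' --- the minus sign in $E'(\theta_0)$ corresponds to the \emph{plus} sign in $\kappa_{\M}(a)/\kappa_{\M}(b)=\pm\lambda/(1-\lambda)$ --- but the mathematics underneath is consistent.
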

 
\begin{defn}
The \textit{tangent line of} $\Eq_{\lambda}(\M)$ \textit{at a cusp point} $p$ is the limit of a sequence of $1$--dimensional vector spaces $T_{q_n}\M$ in $\mathbb{R}P^1$ for any sequence $q_n$ of regular points of $\Eq_{\lambda}(\M)$ converging to $p$.
\end{defn}

By Lemma \ref{LemParallelCurvature}(i) the tangent line to $\Eq_{\lambda}(\M)$ at the cusp point $\lambda a+(1-\lambda)b$ is parallel to the tangent lines to $\M$ at $a$ and $b$.

\begin{defn}\label{DefNormalVectorEq}
Let $\displaystyle\lambda\neq\frac{1}{2}$. The normal vector field to $\Eq_{\lambda}(\M)$ at the point \linebreak $\lambda a+(1-\lambda)b$ is equal to the normal vector field to $\M$ at the point $a$.
\end{defn}

\begin{rem}
If $\M$ is convex we have the well defined continuous normal vector field on the double covering $\M$ of $\Eq_{\frac{1}{2}}(\M)$, namely: $\displaystyle\M\ni a\mapsto\frac{a+b}{2}\in\Eq_{\frac{1}{2}}(\M)$.
\end{rem}

Let us notice that the continuous normal vector field to $\Eq_{\lambda}(\M)$ at regular and cusp points is perpendicular to the tangent line to $\Eq_{\lambda}(\M)$. Using this fact and the above definition we define the rotation number in the following way.

\begin{defn}\label{DefRotationNumber}
The \textit{rotation number} of a smooth curve $\M$ with well defined continuous normal vector field is the rotation number of this vector field. 
\end{defn}

Moreover by Lemma \ref{LemParallelCurvature}(i) we can easily get the next two propositions.

\begin{prop}
Let $\M$ be a generic regular closed curve in $\mathbb{R}^2$. If $l$ is a bitangent line to $\M$ at points $a,b$, then $\Eq_{\lambda}(\M)$ is tangent to $l$ at points $\lambda a+(1-\lambda)b$ and $(1-\lambda)a+\lambda b$.
\end{prop}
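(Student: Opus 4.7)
The plan is to combine Lemma~\ref{LemParallelCurvature}(i) with the elementary observation that an affine combination of two points on a line stays on that line.

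First, the hypothesis that $l$ is bitangent to $\M$ at distinct points $a,b$ means precisely $T_a\M = T_b\M = l$, so $(a,b)$ is a parallel pair in the sense of Definition~\ref{parallelpair}. By Definition~\ref{equidistantSet} both $\lambda a+(1-\lambda)b$ and $(1-\lambda)a+\lambda b$ belong to $\Eq_\lambda(\M)$, the latter because the condition of being a parallel pair is symmetric in $a$ and $b$. Moreover, since $a,b\in l$ and $l$ is an affine line, every affine combination of $a$ and $b$ lies on $l$; in particular both points $\lambda a+(1-\lambda)b$ and $(1-\lambda)a+\lambda b$ lie on $l$.

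Next, I would pin down the tangent direction of $\Eq_\lambda(\M)$ at these two points. When $\lambda a+(1-\lambda)b$ is a regular point of $\Eq_\lambda(\M)$, Lemma~\ref{LemParallelCurvature}(i) applies directly and gives that the tangent line to $\Eq_\lambda(\M)$ there is parallel to $T_a\M=l$. When it is a cusp, the same conclusion follows from the paper's definition of tangent line at a cusp: it is the limit of tangent directions at nearby regular points of $\Eq_\lambda(\M)$, each of which is parallel to the tangent line of $\M$ at a nearby point of $a$, so continuity of the tangent field of $\M$ forces the limit to be parallel to $T_a\M=l$. The same reasoning applies verbatim at $(1-\lambda)a+\lambda b$.

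Finally, a line parallel to $l$ and passing through a point of $l$ must coincide with $l$, so $\Eq_\lambda(\M)$ is tangent to $l$ at both $\lambda a+(1-\lambda)b$ and $(1-\lambda)a+\lambda b$, which is exactly the claim. There is no serious obstacle here; the only minor subtlety is the cusp case, and it is already handled by the paper's explicit cusp-tangent definition together with Lemma~\ref{LemParallelCurvature}(i).
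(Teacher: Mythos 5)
Your proof is correct and follows essentially the same route the paper intends: the paper gives no separate argument for this proposition, stating only that it follows from Lemma~\ref{LemParallelCurvature}(i), which is exactly your combination of that lemma with the observation that affine combinations of $a,b\in l$ stay on $l$. Your explicit treatment of the cusp case via the paper's limit definition of the tangent line is a small but welcome addition that the paper leaves implicit.
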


If $\M$ is a convex curve, then for any line $l$ in $\mathbb{R}^2$ there exist exactly two points $a,b\in\M$ in which the tangent lines to $\M$ at $a$ and $b$ are parallel to $l$. Also there exists exactly one parallel pair $a,b$ of $\M$ such that chord passing through $a$ and $b$ is parallel to $l$. Therefore we get the following proposition. 

\begin{prop}\label{PropNumberOfTangentLines}
Let $\M$ be a generic convex curve. Then for any line $l$ in $\mathbb{R}^2$  
\begin{enumerate}[(i)]
\item there exists exactly one point $p$ in $ \Eq_{\frac{1}{2}}(\M)$ such that the tangent line to $\Eq_{\frac{1}{2}}(\M)$ at $p$ is parallel to $l$.
\item there exist exactly two different points $p_1,p_2$ in $\Eq_{\lambda}(\M)$ for $\lambda\neq \frac{1}{2}$, such that the tangent lines to $\Eq_{\lambda}(\M)$ at $p_1$ and $p_2$ are parallel to $l$.
\item there exists exactly one point $p$ in $\Css(\M)$ such that the tangent line to $\Css(\M)$ at $p$ is parallel to $l$ (\cite{GH1}).
\end{enumerate}
\end{prop}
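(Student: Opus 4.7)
The plan is to reduce all three parts to the two elementary facts about convex curves stated immediately before the proposition: for any line $l$, (a) exactly two points $a,b \in \M$ have $T_a\M$ and $T_b\M$ parallel to $l$, and this $\{a,b\}$ is the unique parallel pair with that tangent direction; (b) exactly one parallel pair of $\M$ has chord $\overline{ab}$ parallel to $l$. Both $\Eq_{\lambda}(\M)$ and $\Css(\M)$ are parametrised by parallel pairs of $\M$, so counting tangent directions will reduce to counting parallel pairs of a prescribed type.

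For parts (i) and (ii), I would first invoke Lemma \ref{LemParallelCurvature}(i) to conclude that at every regular point $\lambda a+(1-\lambda)b$ of $\Eq_{\lambda}(\M)$ the tangent is parallel to $T_a\M$, and then extend this to cusps via the cusp-tangent definition as a limit of tangent directions at regular points, using Corollary \ref{CorConvexNoInf} (no inflexion points appear for convex $\M$) to rule out degenerations. Hence a point of $\Eq_{\lambda}(\M)$ has tangent parallel to $l$ iff its associated parallel pair is the unique pair $\{a,b\}$ from (a). The two orderings of this pair yield the candidate points $\lambda a+(1-\lambda)b$ and $\lambda b+(1-\lambda)a$. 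When $\lambda=1/2$ they coincide as the midpoint of $a,b$, giving exactly one point; when $\lambda\neq 1/2$ they are distinct (as $a\neq b$), giving exactly two.

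For part (iii), I would argue directly from the envelope definition of $\Css(\M)$: the tangent to the envelope at the point associated with a parallel pair $(a,b)$ is the chord line through $a$ and $b$ itself. Consequently the tangent at a point of $\Css(\M)$ is parallel to $l$ iff the corresponding chord is parallel to $l$, and fact (b) supplies a unique such parallel pair and hence a unique such point on $\Css(\M)$.

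The main potential obstacle is the cusp case in (i) and (ii): one must confirm that the tangent-at-cusp direction is genuinely the direction of $T_a\M$, and that no additional singular or cusp points of $\Eq_{\lambda}(\M)$ give rise to tangent directions parallel to $l$ other than those counted. This is handled cleanly by combining the limit definition of the tangent at a cusp with Lemma \ref{LemParallelCurvature}(i) applied along a sequence of regular points converging to the cusp, together with the continuity of the tangent field of $\M$.
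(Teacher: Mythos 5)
Your proposal is correct and follows essentially the same route as the paper: the paper derives the proposition directly from the two facts stated just before it (exactly one parallel pair of a convex curve per tangent direction, and exactly one parallel pair per chord direction) combined with Lemma \ref{LemParallelCurvature}(i) for the equidistants and the envelope/chord tangency for $\Css(\M)$. Your extra care about the tangent direction at cusp points, handled via the limit definition, is exactly the implicit step the paper relies on.
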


\begin{thm}\label{PropCuspsEq}
Let $\M$ be a generic convex curve. Then the number of cusps of $\Eq_{\lambda}(\M)$ for a generic $\displaystyle\lambda\neq \frac{1}{2}$ is even and the number of cusps of $\Eq_{\frac{1}{2}}(\M)$ is odd.
\end{thm}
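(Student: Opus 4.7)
The plan is to realize $\Eq_\lambda(\M)$ explicitly via the outward-normal-angle parameterization of $\M$ and then count cusps as simple zeros of a scalar function on the circle. Since $\M$ is convex, one may parametrize $\M$ by the angle $\theta$ of its outward unit normal, $\gamma\colon S^1\to\M$, so that $\gamma'(\theta)=\rho(\theta)T(\theta)$ where $\rho(\theta)=1/\kappa_\M(\gamma(\theta))>0$ is the radius of curvature and $T(\theta)=(-\sin\theta,\cos\theta)$. The parallel point of $\gamma(\theta)$ (with opposite parametrization orientation) is $\gamma(\theta+\pi)$, so $\Eq_\lambda(\M)$ is the image of
\[
f_\lambda(\theta)=\lambda\gamma(\theta)+(1-\lambda)\gamma(\theta+\pi),
\]
and since $T(\theta+\pi)=-T(\theta)$ one computes directly
\[
f'_\lambda(\theta)=g_\lambda(\theta)\,T(\theta),\qquad g_\lambda(\theta):=\lambda\rho(\theta)-(1-\lambda)\rho(\theta+\pi).
\]
By Proposition \ref{PropSingularPointOfEq}, together with genericity of $\M$ and $\lambda$, the cusps of $\Eq_\lambda(\M)$ correspond exactly to the (simple) zeros of $g_\lambda$.

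For the case $\lambda\neq\tfrac{1}{2}$, if $g_\lambda(\theta_0)=0$ then $\rho(\theta_0+\pi)=\tfrac{\lambda}{1-\lambda}\rho(\theta_0)$, and a short substitution gives $g_\lambda(\theta_0+\pi)=\tfrac{2\lambda-1}{1-\lambda}\rho(\theta_0)\neq 0$. Combined with $f_\lambda(\theta)\neq f_\lambda(\theta+\pi)$ for $\lambda\neq 1/2$, this shows that the zeros of $g_\lambda$ on $[0,2\pi)$ are in bijection with the cusps of $\Eq_\lambda(\M)$. Since $g_\lambda$ is a continuous real-valued function on the circle $S^1$ with only simple zeros, its sign alternates between consecutive zeros and must return to the starting value after one traversal, so the number of zeros is even.

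For the case $\lambda=\tfrac{1}{2}$, the function $g_{1/2}(\theta)=\tfrac{1}{2}(\rho(\theta)-\rho(\theta+\pi))$ satisfies $g_{1/2}(\theta+\pi)=-g_{1/2}(\theta)$, and $f_{1/2}(\theta)=f_{1/2}(\theta+\pi)$, so the zeros of $g_{1/2}$ on $[0,2\pi)$ come in antipodal pairs $\{\theta_0,\theta_0+\pi\}$, each pair corresponding to a single cusp of $\Eq_{1/2}(\M)$. Hence the number of cusps equals the number of simple zeros of $g_{1/2}$ on $[0,\pi)$, and since $g_{1/2}(\pi)=-g_{1/2}(0)$, this count is odd.

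The main obstacle is the identification of cusps with zeros of $g_\lambda$ on the nose in the case $\lambda\neq 1/2$: one must know that distinct zeros of $g_\lambda$ produce distinct cusp points of $\Eq_\lambda(\M)$ and not merely branches meeting at the same singular point of the image. Generically this is fine because self-intersections of $\Eq_\lambda(\M)$ occur at $C^\infty$ regular points (Proposition \ref{PropRegularPointOfEq}) rather than at cusps, but it is this bookkeeping which uses the genericity assumption on $\M$ and on $\lambda$ in an essential way.
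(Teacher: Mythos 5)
Your proof is correct and follows essentially the same route as the paper: both arguments rest on the fact that $\Eq_{\lambda}(\M)$ carries a globally defined normal direction inherited from $\M$ (Lemma \ref{LemParallelCurvature}(i)), so that cusps are exactly the points where the signed speed relative to this frame changes sign, and the parity of the number of sign changes is forced by whether the parameter circle covers $\Eq_{\lambda}(\M)$ once ($\lambda\neq\frac{1}{2}$, hence an even count) or doubly with the antipodal antisymmetry $g_{1/2}(\theta+\pi)=-g_{1/2}(\theta)$ ($\lambda=\frac{1}{2}$, hence an odd count). The paper packages this as a statement about the rotation number of the normal vector field ($1$ versus $\frac{1}{2}$) together with the local picture of that field at a cusp, while you make it explicit via the normal-angle parametrization and the scalar function $g_{\lambda}(\theta)=\lambda\rho(\theta)-(1-\lambda)\rho(\theta+\pi)$; the underlying mechanism is the same.
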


\begin{proof}

For a generic convex curve $\M$, $\Eq_{\frac{1}{2}}(\M)$ has got only cusp singularities and the same statement holds for $\Eq_{\lambda}(\M)$ for a generic $\displaystyle\lambda\neq\frac{1}{2}$. Without loss of generality we may assume that $\M$ is positively oriented.

Let $P: \M\to \M$ maps a point $a$ to the point $b\neq a$ such that $a,b$ is a parallel pair. This map is well defined on a convex curve $\M$. If $f: S^1\to\mathbb{R}^2$ is a parameterization of $\M$, then $S^1\ni s\mapsto \lambda f(s)+(1-\lambda)P(f(s))\in\Eq_{\lambda}(\M)$ is a parameterization of $\Eq_{\lambda}(\M)$ for $\displaystyle\lambda\neq\frac{1}{2}$ and it is a double covering of $\Eq_{\frac{1}{2}}(\M)$. The normal vector to $\Eq_{\lambda}(\M)$ at $\lambda f(s)+(1-\lambda)P(f(s))$ is the normal vector to $\M$ at $f(s)$, so the rotation number of $\Eq_{\lambda}(\M)$ for $\displaystyle\lambda\neq\frac{1}{2}$ is equal to the rotation number of $\M$, which is $1$ for positively oriented convex curves, and the rotation number of $\Eq_{\frac{1}{2}}(\M)$ is equal to $\displaystyle\frac{1}{2}$. 

A continuous normal vector field to the germ of a curve with the cusp singularity is directed outside the cusp on the one of two connected regular components and is directed inside the cusp on the other component as it is illustrated in Figure \ref{PictureNormalVectorToCusp}. Since the rotation number of $\Eq_{\lambda}(\M)$ for $\displaystyle\lambda\neq\frac{1}{2}$ is $1$, the number of cusps of $\Eq_{\lambda}(\M)$ for $\displaystyle\lambda\neq\frac{1}{2}$ is even and the number of cusps of $\Eq_{\frac{1}{2}}(\M)$ is odd because the rotation number of $\Eq_{\frac{1}{2}}(\M)$ is equal to $\displaystyle\frac{1}{2}$. The last statement has been proved by M. Berry in \cite{B1} using a different method.

\begin{figure}[h]
\centering
\includegraphics[scale=0.16]{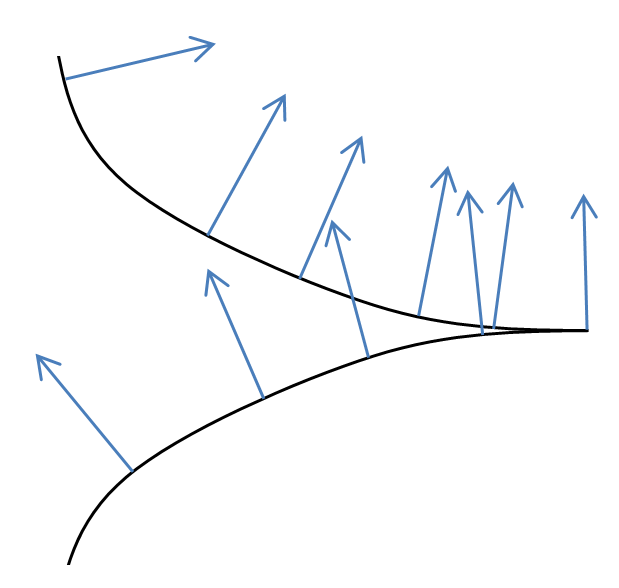}
\caption{An ordinary cusp singularity with a continuous normal vector field. Vectors in upper regular component of a curve are directed outside the cusp, others are directed inside the cusp.}
\label{PictureNormalVectorToCusp}
\end{figure}

\end{proof}

In \cite{B1, L1} it is proved by analytical methods that the number of cusp singularities of the Wigner caustic of a generic convex curve is odd and not smaller than $3$. We present an elementary proof of this fact for the Wigner caustic and the Centre Symmetry Set.

\begin{prop}\label{PropCssCant1Cusp}
Let $\M$ be a generic convex curve. Then the number of cusps of $\Css(\M)$ and $\Eq_{\frac{1}{2}}(\M)$ is not smaller than $3$.
\end{prop}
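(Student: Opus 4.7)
My plan is to rule out the possibility of exactly one cusp on $\Eq_{1/2}(M)$, which combined with Theorem \ref{PropCuspsEq} (odd cusp count) will yield at least three. The statement for $\Css(M)$ will then follow from the inequality [number of cusps of $\Css(M)$] $\geq$ [number of cusps of $\Eq_{1/2}(M)$] that was recalled in the introduction, or equivalently from a Rolle-type argument explained at the end.

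First I would parameterize the convex curve $M$ by its tangent angle $\theta\in\mathbb{R}/2\pi\mathbb{Z}$, which for a convex curve is a smooth diffeomorphism onto the circle. In this parameterization $f'(\theta)=\rho(\theta)(\cos\theta,\sin\theta)$ with $\rho(\theta)=1/\kappa_{M}(f(\theta))>0$, and the parallel pair at $\theta$ is $(f(\theta),f(\theta+\pi))$, traversed in opposite directions. By Proposition \ref{PropSingularPointOfEq} (with $\lambda=1/2$), the cusps of $\Eq_{1/2}(M)$ correspond to zeros of
$$g(\theta):=\rho(\theta)-\rho(\theta+\pi).$$
Since $g(\theta+\pi)=-g(\theta)$, each cusp contributes exactly two zeros of $g$ on $[0,2\pi)$. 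Assuming, for contradiction, that there is only one cusp, $g$ has exactly two (generically simple) zeros $\theta_0$ and $\theta_0+\pi$, and by the antisymmetry we may suppose $g>0$ on $(\theta_0,\theta_0+\pi)$ and $g<0$ on $(\theta_0+\pi,\theta_0+2\pi)$.

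The key observation is that the closedness of $M$ forces
$$0=\int_0^{2\pi}f'(\theta)\,d\theta=\int_0^{2\pi}\rho(\theta)(\cos\theta,\sin\theta)\,d\theta,$$
so $\int\rho\cos\theta\,d\theta=\int\rho\sin\theta\,d\theta=0$. Substituting $\phi=\theta+\pi$ and using $2\pi$-periodicity, one finds $\int g(\theta)\cos\theta\,d\theta=2\int\rho\cos\theta\,d\theta=0$, and likewise for $\sin$. Consequently, for every $\theta_0$,
$$\int_0^{2\pi}g(\theta)\sin(\theta-\theta_0)\,d\theta=0.$$
However, under the assumed sign pattern, $g(\theta)\sin(\theta-\theta_0)$ is strictly positive on both $(\theta_0,\theta_0+\pi)$ (positive times positive) and $(\theta_0+\pi,\theta_0+2\pi)$ (negative times negative), so the integral is strictly positive, a contradiction. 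Hence $\Eq_{1/2}(M)$ has at least three cusps.

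For $\Css(M)$, I would invoke the inequality from the introduction to conclude at once. Alternatively, Lemma \ref{LemParallelCurvature}(iii) identifies the cusps of $\Css(M)$ with zeros of the derivative of a $\pi$-antisymmetric function of $\rho$ (essentially $(\log\rho)(\theta)-(\log\rho)(\theta+\pi)$), so Rolle's theorem on the circle produces at least as many such zeros as zeros of $g$, giving the same bound. The main subtle point I expect is verifying the simple-zero structure of $g$ under the genericity hypothesis, so that the two arcs really have constant sign; once that is granted, the whole proof reduces to the sign check against the vanishing of the first Fourier mode of $g$ forced by $M$ being closed.
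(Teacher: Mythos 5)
Your argument is correct, but it is a genuinely different proof from the one in the paper. The paper's proof is purely geometric: assuming $\Eq_{\frac{1}{2}}(\M)$ has exactly one cusp $p_c$, it takes the tangent line $l$ at $p_c$, maximizes the distance to $l$ over the caustic to produce a second point whose tangent is parallel to $l$, and derives a contradiction with Proposition \ref{PropNumberOfTangentLines}(i) (uniqueness of the point of $\Eq_{\frac{1}{2}}(\M)$ with a given tangent direction); the $\Css(\M)$ case is handled identically via Proposition \ref{PropNumberOfTangentLines}(iii). Your proof is instead the classical Fourier--analytic (Sturm--Hurwitz type) argument: in the tangent-angle parameterization, cusps correspond to zeros of the odd function $g(\theta)=\rho(\theta)-\rho(\theta+\pi)$, the closure of $\M$ kills the first Fourier mode of $g$, and a function orthogonal to $\sin$ and $\cos$ cannot have only two sign changes. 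This is essentially the ``analytical method'' that the paper explicitly attributes to \cite{B1, L1} and deliberately sets out to replace with an elementary proof, so you have in effect reconstructed the prior proof rather than the paper's. Your computation is sound (and, as you note, you do not actually need simplicity of the zeros: constant sign on each open arc already follows from the absence of further zeros there). What your route buys is a direct identification of the cusps with antipodal pairs $\rho(\theta)=\rho(\theta+\pi)$, tying the statement to the Blaschke--S\"uss theorem and admitting quantitative refinements; what the paper's route buys is brevity and a uniform treatment of $\Css(\M)$. Your treatment of $\Css(\M)$ is the one soft spot: quoting the inequality between the two cusp counts is legitimate but imports a cited fact, and the Rolle alternative requires first verifying from Lemma \ref{LemParallelCurvature}(iii) that cusps of $\Css(\M)$ are exactly the critical points of $\log\rho(\theta)-\log\rho(\theta+\pi)$, which you assert but do not check.
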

\begin{proof}

\begin{figure}[h]
\centering
\includegraphics[scale=0.18]{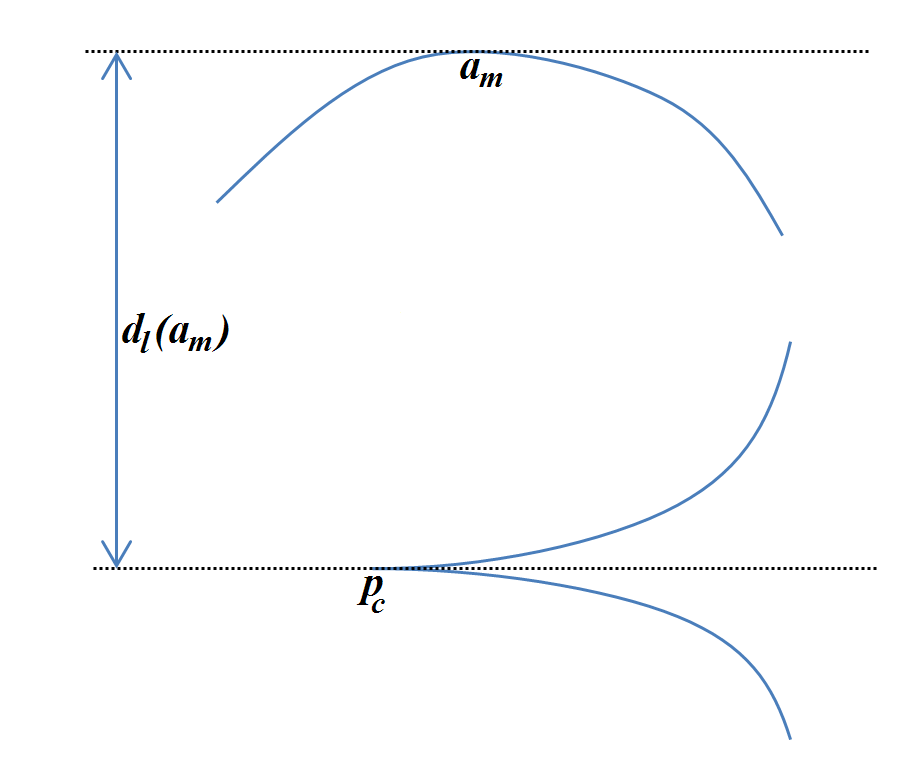}
\caption{The cusp ($p_c$), dotted lines are parallel.}
\label{PictureCusp}
\end{figure}

The number of cusps of the Wigner caustic is odd (see Theorem \ref{PropCuspsEq} or \cite{B1}). Let us assume that the Wigner caustic has got exactly one cusp. Let $p_c$ be the cusp point. Let us consider the line $l$, which is tangent to $\Eq_{\frac{1}{2}}(\M)$ at $p_c$. Let $d_l(a)$ be the distance between a point $a\in\Eq_{\frac{1}{2}}(\M)$ and $l$. Since $d_l(p_c)=0$ and every point $a\in\Eq_{\frac{1}{2}}(\M)$ such that $a\neq p_c$ is a regular point of $\Eq_{\frac{1}{2}}(\M)$, there exists a point $a_m$ such that $d_l(a_m)$ is a maximum value of $d_l$ (see Figure \ref{PictureCusp}). Then the tangent line to the Wigner caustic at $a_m$ is parallel to $l$, which is impossible by Proposition \ref{PropNumberOfTangentLines}(i).

By Proposition \ref{PropNumberOfTangentLines}(iii), the proof for $\Css(\M)$ is similar.

\end{proof}

\pagebreak

\begin{prop}\label{ThmEqForEq}
Let $\M$ be a generic convex curve. 
\begin{enumerate}[(i)]
\item If $\displaystyle\lambda\neq\frac{1}{2}$, then
$$\Eq_{\delta}\left(\Eq_{\lambda}(\M)\right)=\Eq_{\delta(1-\lambda)+\lambda(1-\delta)}(\M).$$
\item If $\lambda\neq\frac{1}{2}, \delta\neq\frac{1}{2}$, then
$$\Css(\Eq_{\lambda}(\M))=\Css(\Eq_{\delta}(\M)).$$
\end{enumerate}
\end{prop}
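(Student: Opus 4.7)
The plan is to prove both parts by identifying explicitly how parallel pairs behave under the equidistant construction.

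For part (i), I would start by fixing a parallel pair $a,b$ of $\M$ and noting that the two points $p = \lambda a + (1-\lambda) b$ and $p' = (1-\lambda) a + \lambda b$ both lie on $\Eq_{\lambda}(\M)$. By Lemma \ref{LemParallelCurvature}(i), extended to cusps by the definition of tangent line at a cusp, the tangent directions to $\Eq_{\lambda}(\M)$ at $p$ and at $p'$ are both parallel to the tangent line to $\M$ at $a$ (equivalently at $b$). Hence $p,p'$ form a parallel pair of $\Eq_{\lambda}(\M)$. By Proposition \ref{PropNumberOfTangentLines}(ii) every tangent direction of $\Eq_{\lambda}(\M)$ is realized by exactly two points, so this exhausts all parallel pairs of $\Eq_{\lambda}(\M)$: they all arise from parallel pairs of $\M$ in this way.

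A direct computation now gives
\begin{align*}
\delta\, p + (1-\delta)\, p' &= \delta\bigl(\lambda a + (1-\lambda) b\bigr) + (1-\delta)\bigl((1-\lambda) a + \lambda b\bigr) \\
&= \bigl(\delta\lambda + (1-\delta)(1-\lambda)\bigr) a + \bigl(\delta(1-\lambda) + (1-\delta)\lambda\bigr) b.
\end{align*}
Setting $\mu = \delta(1-\lambda) + \lambda(1-\delta)$, the right-hand side is $(1-\mu)a + \mu b$, which is a point of $\Eq_{\mu}(\M)$ (since $\Eq_{\mu}(\M)$ is clearly symmetric in $\mu \leftrightarrow 1-\mu$ as $a,b$ runs over parallel pairs). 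Similarly $(1-\delta)p + \delta p' = \mu a + (1-\mu) b \in \Eq_{\mu}(\M)$. This shows $\Eq_{\delta}(\Eq_{\lambda}(\M)) \subseteq \Eq_{\mu}(\M)$; the reverse inclusion follows because every parallel pair of $\M$ produces a parallel pair of $\Eq_{\lambda}(\M)$ by the above, so every point of $\Eq_{\mu}(\M)$ is reached.

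For part (ii), the observation is that the chord determined by the parallel pair $p,p'$ of $\Eq_{\lambda}(\M)$ coincides, as a line, with the chord through $a,b$: indeed $p - p' = (2\lambda-1)(a-b)$, which is nonzero for $\lambda \neq \frac{1}{2}$ and parallel to $a-b$, while $p$ itself lies on the line through $a$ and $b$. Hence the family of lines joining parallel pairs of $\Eq_{\lambda}(\M)$ is literally the same family as the lines joining parallel pairs of $\M$, and so $\Css(\Eq_{\lambda}(\M)) = \Css(\M)$. The same argument applied to $\delta$ yields $\Css(\Eq_{\delta}(\M)) = \Css(\M)$, and (ii) follows.

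The main obstacle is the first step of (i), namely verifying that \emph{every} parallel pair of $\Eq_{\lambda}(\M)$ comes from one on $\M$; once Proposition \ref{PropNumberOfTangentLines}(ii) is used to count tangent directions this is immediate, but without it one would have to chase through the parametrization. The rest is bookkeeping and a one-line observation about collinearity of the chords.
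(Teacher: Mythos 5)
Your proof is correct and follows essentially the same route as the paper's: both identify the parallel pairs of $\Eq_{\lambda}(\M)$ as $\lambda a+(1-\lambda)b$, $(1-\lambda)a+\lambda b$ via Lemma \ref{LemParallelCurvature}(i), use Proposition \ref{PropNumberOfTangentLines}(ii) to see these exhaust all parallel pairs, perform the same affine computation for (i), and observe for (ii) that the chords joining parallel pairs of $\Eq_{\lambda}(\M)$ coincide with those of $\M$, so the envelope is unchanged. Your write-up is somewhat more explicit about the collinearity of the chords and the $\mu\leftrightarrow 1-\mu$ symmetry, but there is no substantive difference.
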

\begin{proof}Let $a,b$ be a parallel pair of $\M$. By Theorem \ref{LemParallelCurvature}(i) $p=\lambda a+(1-\lambda)b$, $q=(1-\lambda)a+\lambda b$ is a parallel pair of $\Eq_{\lambda}(\M)$ and by Proposition \ref{PropNumberOfTangentLines} there are no more points on $\Eq_{\lambda}(\M)$ for which tangent lines at them are parallel to tangent lines at $p, q$. Then $r=\delta p+(1-\delta)q$ is a point of $\Eq_{\delta}\left(\Eq_{\lambda}(\M)\right)$ and one can get that $r=(1-\Lambda)a+\Lambda b$ where $\Lambda=\delta(1-\lambda)+\lambda(1-\delta)$.
Any equidistant of $\M$, except the Wigner caustic, has the same collection of chords joining parallel pairs on them, then the Centre Symmetry Set  of any equidistant, as an envelope of these lines, is the same set.

\end{proof}

By Proposition \ref{ThmEqForEq} we get the following corollary on reconstruction of the original curve from its affine equidistant and the stability of the Wigner caustic.
\begin{cor}\label{CorReconstruction}
 Let $\M$ be a generic convex curve. If $\lambda\neq\frac{1}{2}$, then
\begin{align*}
& M	=\Eq_{-\lambda(1-2\lambda)^{-1}}(\Eq_{\lambda}(\M))=\Eq_{(1-\lambda)(1-2\lambda)^{-1}}(\Eq_{\lambda}(\M)),\\
& \Eq_{\frac{1}{2}}(\Eq_{\lambda}(\M)) =\Eq_{\frac{1}{2}}(\M).
\end{align*}
\end{cor}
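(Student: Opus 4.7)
The plan is to apply Proposition \ref{ThmEqForEq}(i) and solve for the parameter $\delta$ that returns either $\M$ itself or the Wigner caustic $\Eq_{1/2}(\M)$. First I would record the trivial observation that, directly from Definition \ref{equidistantSet}, $\Eq_{0}(\M) = \Eq_{1}(\M) = \M$: indeed $0 \cdot a + 1 \cdot b = b$ as $(a,b)$ ranges over all parallel pairs traces out $\M$, and symmetrically for $\delta=1$. Consequently, to reconstruct $\M$ from $\Eq_{\lambda}(\M)$ it suffices to find $\delta$ with
\[
\delta(1-\lambda) + \lambda(1-\delta) \in \{0, 1\},
\]
and then invoke Proposition \ref{ThmEqForEq}(i).

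Next I would carry out the elementary algebra. Expanding gives $\delta(1-\lambda) + \lambda(1-\delta) = \delta + \lambda - 2\lambda\delta = \delta(1-2\lambda) + \lambda$. Since $\lambda \neq 1/2$, the factor $1-2\lambda$ is nonzero, so I can solve explicitly: setting the expression equal to $0$ yields $\delta = -\lambda(1-2\lambda)^{-1}$, while setting it equal to $1$ yields $\delta = (1-\lambda)(1-2\lambda)^{-1}$. Plugging either value back into Proposition \ref{ThmEqForEq}(i) gives $\Eq_{\delta}(\Eq_{\lambda}(\M)) = \M$, which is the first displayed equation.

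For the second displayed equation (stability of the Wigner caustic), I would simply substitute $\delta = 1/2$ into the identity of Proposition \ref{ThmEqForEq}(i). The parameter combination becomes
\[
\tfrac{1}{2}(1-\lambda) + \lambda\bigl(1-\tfrac{1}{2}\bigr) = \tfrac{1-\lambda}{2} + \tfrac{\lambda}{2} = \tfrac{1}{2},
\]
so $\Eq_{1/2}(\Eq_{\lambda}(\M)) = \Eq_{1/2}(\M)$, as claimed.

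There is essentially no obstacle: all the geometric content is already packaged into Proposition \ref{ThmEqForEq}(i), and the corollary is a one-line algebraic deduction. The only minor point that deserves to be made explicit is the identification $\Eq_0(\M) = \Eq_1(\M) = \M$, which however follows immediately from the defining formula $\lambda a + (1-\lambda) b$ at the endpoint values of $\lambda$.
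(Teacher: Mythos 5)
Your proof is correct and follows exactly the route the paper intends: Corollary \ref{CorReconstruction} is stated as an immediate consequence of Proposition \ref{ThmEqForEq}(i), and your algebra (solving $\delta(1-2\lambda)+\lambda\in\{0,1\}$ and substituting $\delta=\tfrac{1}{2}$) together with the observation $\Eq_0(\M)=\Eq_1(\M)=\M$ is precisely the omitted computation.
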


\section{Existence of a singular point of an affine $\lambda$--equidistant.}\label{SecExistence}

In this section we prove theorems on existence of singularities of affine equidistants and the Wigner caustic. We also present some applications of them.

\begin{figure}[h]
\centering
\includegraphics[scale=0.28]{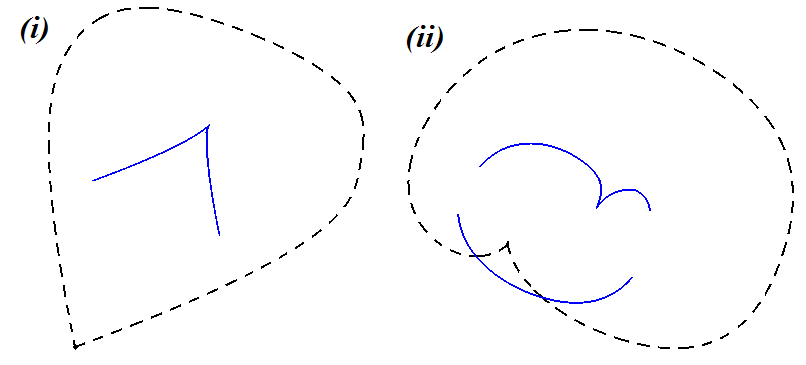}
\caption{(i) A convex loop $L$ (the dashed line) and $\Eq_{\frac{1}{2}}(L)$, (ii) a non--convex loop $L$ (the dashed line) and $\Eq_{\frac{1}{2}}(L)$.}
\label{PictureLoops}
\end{figure}

\begin{defn}
A simple smooth curve $\gamma: (s_1,s_2)\to\mathbb{R}^2$ with non-vanishing curvature is called a \textit{loop} if $\displaystyle\lim_{s\to s_1^+}\gamma(s)=\lim_{s\to s_2^-}\gamma(s)$. A loop $\gamma$ is called \textit{convex} if the absolute value of its rotation number is not greater than $1$, otherwise it is called \textit{non--convex}.
\end{defn}

We illustrate examples of loops in Figure \ref{PictureLoops}.

\begin{thm}\label{CorWCLoop}
The Wigner caustic of a loop has a singular point.
\end{thm}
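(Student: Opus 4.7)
I would argue by contradiction: suppose that $\Eq_{\frac{1}{2}}(L)$ has no singular points. Since $L$ has non-vanishing curvature, its tangent angle $\theta$ is a strictly monotone function of arc length, so I reparametrize $L = \{\gamma(\theta) : \theta \in [0,\Theta]\}$ where $\Theta$ is the total rotation and $h(\theta) := 1/\kappa(\gamma(\theta)) > 0$ (without loss of generality). A short topological observation gives $\Theta > \pi$: otherwise the tangent stays in an open half-plane and $\gamma$ cannot return to $p_0 = \gamma(0) = \gamma(\Theta)$.

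The opposite-tangent parallel pairs of $L$ are $(\gamma(\theta), \gamma(\theta+\pi))$ for $\theta \in [0,\Theta-\pi]$; the same-tangent pairs (which appear only when $\Theta > 2\pi$) never contribute singular points, since the corresponding midpoint map $\tfrac12(\gamma(\theta)+\gamma(\theta+2\pi))$ has derivative $\tfrac12(h(\theta)+h(\theta+2\pi))(\cos\theta,\sin\theta)\neq 0$. For the opposite-tangent midpoint $m(\theta) := \tfrac12(\gamma(\theta)+\gamma(\theta+\pi))$ one computes
\[
m'(\theta) = \tfrac12\bigl(h(\theta)-h(\theta+\pi)\bigr)(\cos\theta,\sin\theta) =: \tfrac12\, g(\theta)\,(\cos\theta,\sin\theta),
\]
so by Proposition~\ref{PropSingularPointOfEq} the contradiction hypothesis forces $g$ to have constant sign on $[0,\Theta-\pi]$.

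For the convex case $\Theta \in (\pi,2\pi]$, I would compare the Wigner caustic chord
\[
m(\Theta-\pi) - m(0) = \tfrac12\bigl(\gamma(\Theta-\pi)-\gamma(\pi)\bigr) = \tfrac12\int_0^{\Theta-\pi} g(\theta)(\cos\theta,\sin\theta)\,d\theta
\]
with the loop-closure identity $\int_0^\Theta h(\theta)(\cos\theta,\sin\theta)\,d\theta = 0$, obtaining
\[
\int_0^{\Theta-\pi} g(\theta)(\cos\theta,\sin\theta)\,d\theta = -\int_{\Theta-\pi}^{\pi} h(\theta)(\cos\theta,\sin\theta)\,d\theta.
\]
After the substitution $\psi=\theta+\pi$ the right side becomes $\int_\Theta^{2\pi} h(\psi-\pi)(\cos\psi,\sin\psi)\,d\psi$, a positive-weight integral of a unit vector over the arc $\psi \in [\Theta,2\pi]$ of length strictly less than $\pi$, so its direction lies strictly in the angular range $(\Theta,2\pi)$. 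The left side, with $g$ of constant sign, is a positive-weight combination of $(\cos\theta,\sin\theta)$ over $[0,\Theta-\pi]$ (directions in $(0,\Theta-\pi)$ if $g>0$) or of $(\cos(\theta+\pi),\sin(\theta+\pi))$ over the rotated interval $[\pi,\Theta]$ (directions in $(\pi,\Theta)$ if $g<0$); both ranges are disjoint from $(\Theta,2\pi)$, giving the contradiction.

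The main obstacle is the non-convex case $\Theta > 2\pi$, where the Wigner caustic's tangent rotates by more than $\pi$ and the left-hand integral above is no longer directionally localized to a half-plane. A plausible route is to use a partial identity such as $\int_0^\pi g(\phi)(\cos\phi,\sin\phi)\,d\phi = -\int_{2\pi}^\Theta h(\theta)(\cos\theta,\sin\theta)\,d\theta$ (again from loop-closure, writing $\int_0^\Theta = \int_0^\pi + \int_\pi^{2\pi} + \int_{2\pi}^\Theta$ and shifting the middle piece), which via the $y$-projection gives a clean contradiction for one sign of $g$; the opposite sign should then be handled by a symmetric identity applied at the other end of the $\theta$-range $[\Theta-2\pi,\Theta-\pi]$, or by a supplementary geometric argument at the loop's endpoint.
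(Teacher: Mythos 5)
Your convex case ($\pi<\Theta\le 2\pi$) is correct and complete, and it is a genuinely different argument from the paper's: the paper deduces Theorem \ref{CorWCLoop} from Proposition \ref{ThmSingPointsGeneralization1} (convex loops) and Proposition \ref{PropSingPointsGeneralization3} (non-convex loops), whose proofs split the loop into two arcs meeting the hypotheses there, affinely normalize the bounding tangent parallelogram to a unit square, and integrate the reparametrization $t(s)$ defined by the parallel-pair condition. Your tangent-angle parametrization with density $h=1/\kappa$ and the closure identity $\int_0^\Theta h(\theta)(\cos\theta,\sin\theta)\,d\theta=0$ packages the same kind of integral contradiction globally and avoids choosing a splitting point. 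Two small things you should record: a simple loop has total turning $\Theta\in(\pi,3\pi)$ by the Umlaufsatz for a piecewise-smooth simple closed curve, which is what guarantees that only the pairings $\theta\mapsto\theta+\pi$ and $\theta\mapsto\theta+2\pi$ occur; and when $\Theta=2\pi$ your identity degenerates to $\int_0^{\pi}g(\theta)(\cos\theta,\sin\theta)\,d\theta=0$, which already contradicts constancy of the sign of $g$ via the $y$-component.

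The genuine gap is the non-convex case $\Theta\in(2\pi,3\pi)$, which you leave as ``a plausible route.'' Your first identity $\int_0^{\pi}g(\phi)(\cos\phi,\sin\phi)\,d\phi=-\int_{2\pi}^{\Theta}h(\theta)(\cos\theta,\sin\theta)\,d\theta$ is correct and its $y$-component rules out $g>0$, but the proposed ``symmetric identity at the other end,'' namely $\int_{\Theta-2\pi}^{\Theta-\pi}g(\phi)(\cos\phi,\sin\phi)\,d\phi=-\int_0^{\Theta-2\pi}h(\theta)(\cos\theta,\sin\theta)\,d\theta$, cannot be finished by the $y$-projection as you suggest: the interval $[\Theta-2\pi,\Theta-\pi]$ straddles $\pi$, so $\sin\phi$ changes sign there and, with $g<0$, the sign of the left-hand $y$-component is undetermined. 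The case $g<0$ does follow from this identity, but only via the directional convex-cone argument you already used in the convex case: with $g<0$ the left side is the negative of a positive-weight average of unit vectors with angles in $[\Theta-2\pi,\Theta-\pi]$ (an interval of length exactly $\pi$), hence a nonzero vector with direction angle in the open sector $(\Theta-\pi,\Theta)$, while the right side is a nonzero vector with direction angle in $(\pi,\Theta-\pi)$; these open sectors are disjoint, giving the contradiction. Until this (or an equivalent) completion is written out, half of the theorem --- the non-convex loops, which the paper handles via Proposition \ref{PropSingPointsGeneralization3} --- remains unproven in your proposal.
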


Theorem \ref{CorWCLoop} follows from the more general, but technical, propositions: Proposition \ref{ThmSingPointsGeneralization1} in the case of convex loops and Proposition \ref{PropSingPointsGeneralization3} in the case of non-convex loops.

\begin{figure}[h]
\centering
\includegraphics[scale=0.25]{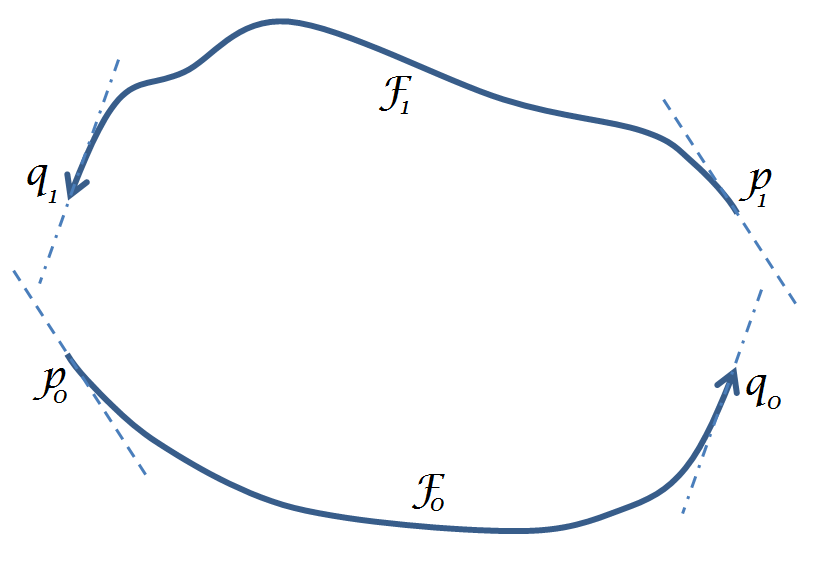}
\caption{}
\label{FigWrinkels1}
\end{figure}

\begin{prop}\label{ThmSingCurv1}
Let $\mathcal{F}_0$, $\mathcal{F}_1$ be embedded curves with endpoints $p_0, q_0$ and $p_1, q_1$, respectively, such that
\begin{enumerate}[(i)]
\item $p_0, p_1$ and $q_0, q_1$ are parallel pairs,
\item for every point $a_1$ in $\mathcal{F}_1$ there exists a point $a_0$ in $\mathcal{F}_0$ such that $a_0, a_1$ is a parallel pair,
\item $\kappa_{\mathcal{F}_1}(p_1)$, $\kappa_{\mathcal{F}_1}(q_1)$ and $\kappa_{\mathcal{F}_0}(a_0)$ for every $a_0\in\mathcal{F}_0$ are positive,
\item the rotation number of $\mathcal{F}_0$ is smaller than $\displaystyle\frac{1}{2}$,
\item $\mathcal{F}_0, \mathcal{F}_1$ are curved in the different sides at $p_0, p_1$ and $q_0, q_1$.
\end{enumerate}
If $\rho_{\min}$ (resp. $\rho_{\max}$) is the minimum (resp. the maximum) of the set \linebreak $\displaystyle\left\{\frac{\kappa_{\mathcal{F}_1}(p_1)}{\kappa_{\mathcal{F}_0}(p_0)}, \frac{\kappa_{\mathcal{F}_1}(q_1)}{\kappa_{\mathcal{F}_0}(q_0)}\right\}$ then the set $\Eq_{\lambda}(\mathcal{F}_0\cup\mathcal{F}_1)$ has a singular point for every \linebreak $\displaystyle\lambda\in\left[\frac{\rho_{\min}}{1+\rho_{\min}}, \frac{\rho_{\max}}{1+\rho_{\max}}\right]\cup\left[1-\frac{\rho_{\max}}{1+\rho_{\max}}, 1-\frac{\rho_{\min}}{1+\rho_{\min}}\right]$. 
\end{prop}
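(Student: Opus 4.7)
The plan is to apply Proposition \ref{PropSingularPointOfEq} to parallel pairs $(a_0,a_1)$ with $a_0\in\mathcal{F}_0$ and $a_1\in\mathcal{F}_1$, combined with the intermediate value theorem applied to the ratio of curvatures. The key observation is that if we can realize every ratio $\mu\in[\rho_{\min},\rho_{\max}]$ as $\kappa_{\mathcal{F}_1}(a_1)/\kappa_{\mathcal{F}_0}(a_0)$ at a parallel pair satisfying the hypotheses of Proposition~\ref{PropSingularPointOfEq}, then, since the map $\mu\mapsto \mu/(1+\mu)$ is monotone, we cover the $\lambda$'s in the first interval, and then the second interval follows by swapping the roles of $a_0$ and $a_1$ (equivalently, by $\Eq_{\lambda}=\Eq_{1-\lambda}$ as subsets of $\mathbb{R}^2$).

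First I would construct a continuous parallel-pair map $\varphi\colon\mathcal{F}_1\to\mathcal{F}_0$ with $\varphi(p_1)=p_0$ and $\varphi(q_1)=q_0$. Its well-definedness comes from (ii); single-valuedness and continuity come from (iii) and (iv): $\kappa_{\mathcal{F}_0}>0$ makes the tangent direction of $\mathcal{F}_0$ strictly monotone, while the rotation-number bound $<1/2$ forces this direction to sweep through an angular interval of length less than $\pi$, so no two points of $\mathcal{F}_0$ share a tangent line direction. I would then check that at each parallel pair $(\varphi(a_1),a_1)$ the orientations of $\mathcal{F}_0$ and $\mathcal{F}_1$ give antiparallel unit tangents, so that the ``opposite direction'' hypothesis of Proposition~\ref{PropSingularPointOfEq} holds. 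At the endpoints this is forced by (iii) and (v): both curvatures being positive places each center of curvature on the side of the tangent line to which the principal normal points, so that opposite-sided centers force antiparallel principal normals and hence antiparallel unit tangents. The same conclusion propagates along $\mathcal{F}_1$ because (ii) together with (iv) confines the tangent directions of $\mathcal{F}_1$ to the union of two arcs of the unit circle differing by $\pi$; connectedness of $\mathcal{F}_1$ and the antiparallel alignment at the endpoints then force all of $\mathcal{F}_1$ into the arc antipodal to that of $\mathcal{F}_0$.

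Next I would define $\displaystyle\rho(a_1):=\kappa_{\mathcal{F}_1}(a_1)/\kappa_{\mathcal{F}_0}(\varphi(a_1))$, a continuous positive function on $\mathcal{F}_1$ with $\rho(p_1),\rho(q_1)>0$ by (iii). By the intermediate value theorem, $\rho$ attains every value in $[\rho_{\min},\rho_{\max}]$. Given $\lambda\in\bigl[\tfrac{\rho_{\min}}{1+\rho_{\min}},\tfrac{\rho_{\max}}{1+\rho_{\max}}\bigr]$, the number $\mu:=\lambda/(1-\lambda)$ lies in $[\rho_{\min},\rho_{\max}]$, so one can pick $a_1^{*}\in\mathcal{F}_1$ with $\rho(a_1^{*})=\mu$, i.e.\ $\kappa_{\mathcal{F}_1}(a_1^{*})/\kappa_{\mathcal{F}_0}(\varphi(a_1^{*}))=\lambda/(1-\lambda)$. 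Proposition~\ref{PropSingularPointOfEq} then exhibits $\lambda a_1^{*}+(1-\lambda)\varphi(a_1^{*})$ as a singular point of $\Eq_{\lambda}(\mathcal{F}_0\cup\mathcal{F}_1)$, and swapping the roles of the two points of the pair handles the second interval.

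The main technical obstacle is justifying single-valuedness and continuity of $\varphi$ and propagating the antiparallel alignment of tangents from the endpoints into the interior of $\mathcal{F}_1$ (where $\kappa_{\mathcal{F}_1}$ is not assumed to have any fixed sign). This is precisely where the rotation-number hypothesis (iv) earns its keep, working together with (iii) to bound the angular range of tangent directions and with (v) to anchor the correct choice of arc at both endpoints; once this is established, the rest of the argument is a transparent application of Proposition~\ref{PropSingularPointOfEq} and the intermediate value theorem.
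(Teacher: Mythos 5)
Your proposal is correct and takes essentially the same route as the paper: both reduce the statement to Proposition \ref{PropSingularPointOfEq} via the parallel-pair correspondence between $\mathcal{F}_1$ and $\mathcal{F}_0$ (whose derivative is the curvature ratio) together with an intermediate-value argument between the two endpoint ratios $\rho_{\min}$ and $\rho_{\max}$, and both dispatch the second interval via $\Eq_{\lambda}=\Eq_{1-\lambda}$. The only cosmetic differences are that the paper phrases the intermediate-value step as Darboux's theorem applied to $t'(s)=\kappa_{\mathcal{F}_1}(f(s))/\kappa_{\mathcal{F}_0}(g(t(s)))$ where you apply the IVT to the continuous ratio directly, and that you spell out the single-valuedness, continuity, and antiparallel orientation of the correspondence, which the paper treats as immediate from (ii)--(iv) and the implicit function theorem.
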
 
\begin{proof}
Since $\Eq_{\lambda}(\mathcal{F}_0\cup\mathcal{F}_1)=\Eq_{1-\lambda}(\mathcal{F}_0\cup\mathcal{F}_1)$, let us fix $\lambda$ in $\displaystyle\left[\frac{\rho_{\min}}{1+\rho_{\min}}, \frac{\rho_{\max}}{1+\rho_{\max}}\right]$. Let $g:[t_0,t_1]\to\mathbb{R}^2, f:[s_0,s_1]\to\mathbb{R}^2$ be the arc length parameterizations of $\mathcal{F}_0, \mathcal{F}_1$, respectively. From (ii)-(iii) we get that for every point $a_1$ in $\mathcal{F}_1$ there exists exactly one point $a_0$ in $\mathcal{F}_0$ such that $a_0, a_1$ is a parallel pair. Therefore there exists a function $t:[s_0,s_1]\to[t_0,t_1]$ such that
\begin{align}\label{ParallelPropertyThm31}
f'(s)=-g'(t(s)).
\end{align}
By the implicit function theorem the function $t$ is smooth and $\displaystyle t'(s)=\frac{\kappa_{\mathcal{F}_1}(f(s))}{\kappa_{\mathcal{F}_0}(g(t(s)))}$. Let $\displaystyle\varrho=\frac{\lambda}{1-\lambda}$. Then $\varrho$ belongs to $[\rho_{\min}, \rho_{\max}]$. By Darboux Theorem there exists $s\in[s_0,s_1]$ such that $t'(s)=\varrho$. Then by Proposition \ref{PropSingularPointOfEq} the set $\Eq_{\lambda}(\mathcal{F}_0\cup\mathcal{F}_1)$ has a singular point for $\displaystyle\lambda=\frac{\varrho}{1+\varrho}$.

\end{proof}

Arcs satisfying assumptions of Proposition \ref{ThmSingCurv1} are illustrated in Figure \ref{FigWrinkels1}.

By the same argument we get the following result.

\begin{prop}\label{PropSingCurv2}
Under the assumptions of Proposition \ref{ThmSingCurv1} if we replace (iv) with
\begin{itemize}
\item[\textit{(iv)}] $\mathcal{F}_0, \mathcal{F}_1$ are curved in the same side at $p_0, p_1$ and $q_0, q_1$,
\end{itemize}
then the set $\Eq_{\lambda}(\mathcal{F}_0\cup\mathcal{F}_1)$ has a singular point
\begin{itemize}
\item for every $\displaystyle
\lambda\in\left[\frac{\rho_{\min}}{\rho_{\min}-1},\frac{\rho_{\max}}{\rho_{\max}-1} \right]\cup\left[1-\frac{\rho_{\max}}{\rho_{\max}-1}, 1-\frac{\rho_{\min}}{\rho_{\min}-1}\right]$ \linebreak if $\rho_{\min}>1$,
\item for every $\displaystyle
\lambda\in\left(-\infty, 1-\frac{\rho_{\max}}{\rho_{\max}-1}\right]\cup\left[\frac{\rho_{\max}}{\rho_{\max}-1}, \infty\right)$
if $\rho_{\min}<1<\rho_{\max}$.
\end{itemize} 
\end{prop}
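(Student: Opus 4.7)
The plan is to mimic the proof of Proposition~\ref{ThmSingCurv1} almost verbatim, changing only those ingredients that depend on how $\mathcal{F}_0$ and $\mathcal{F}_1$ sit relative to their common tangent lines. Take arc length parameterizations $g\colon[t_0,t_1]\to\mathbb{R}^2$ and $f\colon[s_0,s_1]\to\mathbb{R}^2$; by hypotheses (i)--(iii) and the implicit function theorem, there is a smooth function $t\colon[s_0,s_1]\to[t_0,t_1]$ matching each $f(s)$ with its unique parallel partner $g(t(s))$ on $\mathcal{F}_0$.

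The first modification replaces equation~(\ref{ParallelPropertyThm31}). Because $\mathcal{F}_0$ and $\mathcal{F}_1$ are now curved in the \emph{same} side at $p_0,p_1$ and $q_0,q_1$ and all curvatures are positive, the common tangent direction agrees at each parallel pair, so
\[
f'(s)=g'(t(s))
\]
rather than its negative. (This is exactly the same-side condition at the endpoints; since $f'(s)$ and $g'(t(s))$ are continuous parallel unit vectors, the sign cannot flip in between.) Differentiating and using the Frenet relations still gives $t'(s)=\kappa_{\mathcal{F}_1}(f(s))/\kappa_{\mathcal{F}_0}(g(t(s)))>0$, so by Darboux's theorem $t'$ attains every value in $[\rho_{\min},\rho_{\max}]$.

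The second modification is the singularity criterion itself. Parameterizing $\Eq_{\lambda}(\mathcal{F}_0\cup\mathcal{F}_1)$ by $h(s)=\lambda f(s)+(1-\lambda)g(t(s))$ and using $f'(s)=g'(t(s))$ yields
\[
h'(s)=\bigl[\lambda+(1-\lambda)t'(s)\bigr]f'(s),
\]
so $h(s)$ is a singular point if and only if $\lambda=\varrho/(\varrho-1)$, where $\varrho=t'(s)$. This is the same-side counterpart of Proposition~\ref{PropSingularPointOfEq}, and it is consistent with Proposition~\ref{PropRegularPointOfEq}(i), which already rules out singularities for $\lambda\in(0,1)$.

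The only delicate point is the pole of the map $\varrho\mapsto\varrho/(\varrho-1)$ at $\varrho=1$, which is precisely what produces the dichotomy in the statement. If $\rho_{\min}>1$ the map is a monotone diffeomorphism on $[\rho_{\min},\rho_{\max}]$, and its image together with its reflection under the symmetry $\Eq_{\lambda}=\Eq_{1-\lambda}$ yields the two bounded intervals of the first case. If $\rho_{\min}<1<\rho_{\max}$, Darboux guarantees that $t'$ attains every $\varrho\in(1,\rho_{\max}]$; the direct image is $[\rho_{\max}/(\rho_{\max}-1),+\infty)$, whose reflection about $\lambda=1/2$ is $(-\infty,1-\rho_{\max}/(\rho_{\max}-1)]$, giving the second case. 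Apart from this bookkeeping, the argument is formally identical to the proof of Proposition~\ref{ThmSingCurv1}.
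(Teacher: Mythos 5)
Your proof is correct and follows exactly the route the paper intends: the paper gives no separate argument for Proposition \ref{PropSingCurv2} beyond the remark ``by the same argument'' as Proposition \ref{ThmSingCurv1}, and you have correctly identified the two places where that argument must be modified, namely the sign in the parallel-pair relation (now $f'(s)=g'(t(s))$, fixed at the endpoints by the same-side condition with positive curvatures and propagated by continuity) and the resulting singularity criterion $\lambda=\varrho/(\varrho-1)$ in place of $\varrho/(\varrho+1)$, after which the Darboux argument and the $\lambda\mapsto 1-\lambda$ symmetry yield precisely the stated intervals in both cases.
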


Proposition \ref{CorSingCurv1} and Proposition \ref{CorSingCurv2} are the limiting versions of Proposition \ref{ThmSingCurv1} and Proposition \ref{PropSingCurv2}, respectively.

\begin{prop}\label{CorSingCurv1}
Let $\mathcal{F}_0$, $\mathcal{F}_1$ be embedded curves with endpoints $p_0, q_0$ and $p_1, q_1$, respectively, such that
\begin{enumerate}[(i)]
\item $p_0, p_1$ and $q_0, q_1$ are parallel pairs,
\item for every point $a_1$ in $\mathcal{F}_1$ there exists a point $a_0$ in $\mathcal{F}_0$ such that $a_0, a_1$ is a parallel pair,
\item $\kappa_{\mathcal{F}_0}(a)>0$ for every $a\neq p_0$, $\kappa_{\mathcal{F}_0}(p_0)=0$, $\kappa_{\mathcal{F}_1}(q_1)=0$ and $\kappa_{\mathcal{F}_1}(p_1)>0$,
\item the rotation number of $\mathcal{F}_0$ is smaller than $\displaystyle\frac{1}{2}$,
\item $\mathcal{F}_0, \mathcal{F}_1$ are curved in the different sides at parallel pairs $a_0, a_1$ close to $p_0, p_1$ and $q_0, q_1$, respectively. 
\end{enumerate}
Then the set $\Eq_{\lambda}(\mathcal{F}_0\cup\mathcal{F}_1)$ has a singular point for every $\lambda\in(0,1)$.
\end{prop}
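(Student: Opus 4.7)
The plan is to run the argument of Proposition \ref{ThmSingCurv1} in the limiting configuration where the two endpoint curvature ratios degenerate to $\rho_{\min}=0$ and $\rho_{\max}=+\infty$, so that the two intervals appearing in that proposition open up to cover the full range $(0,1)$.

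Concretely, let $f\colon[s_0,s_1]\to\mathbb{R}^2$ and $g\colon[t_0,t_1]\to\mathbb{R}^2$ be arc length parameterizations of $\mathcal{F}_1$ and $\mathcal{F}_0$ with $f(s_0)=p_1$, $f(s_1)=q_1$, $g(t_0)=p_0$, $g(t_1)=q_0$. Assumptions (ii), (iii) and (v), together with the fact that $\kappa_{\mathcal{F}_0}$ vanishes only at $p_0$, produce (as in Proposition \ref{ThmSingCurv1}) a continuous function $t\colon[s_0,s_1]\to[t_0,t_1]$ with $f'(s)=-g'(t(s))$. On the half-open interval $(s_0,s_1]$ the denominator $\kappa_{\mathcal{F}_0}(g(t(s)))$ is strictly positive, so the implicit function theorem applies and shows that $t$ is smooth there, with
$$
t'(s)=\frac{\kappa_{\mathcal{F}_1}(f(s))}{\kappa_{\mathcal{F}_0}(g(t(s)))}.
$$
From this I read off $t'(s_1)=\kappa_{\mathcal{F}_1}(q_1)/\kappa_{\mathcal{F}_0}(q_0)=0$ and $\lim_{s\to s_0^{+}}t'(s)=+\infty$, the latter because $\kappa_{\mathcal{F}_1}(p_1)>0$ while $\kappa_{\mathcal{F}_0}(g(t(s)))\to\kappa_{\mathcal{F}_0}(p_0)=0$. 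Continuity of $t'$ on $(s_0,s_1]$ combined with the intermediate value theorem then forces $t'$ to take every value in $(0,+\infty)$. Given any $\lambda\in(0,1)$, setting $\varrho=\lambda/(1-\lambda)\in(0,\infty)$ yields some $s\in(s_0,s_1)$ with $t'(s)=\varrho$, and Proposition \ref{PropSingularPointOfEq} converts this into a singular point of $\Eq_\lambda(\mathcal{F}_0\cup\mathcal{F}_1)$ at $\lambda g(t(s))+(1-\lambda)f(s)$.

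The only delicate point, absent in Proposition \ref{ThmSingCurv1}, is that the implicit function theorem fails at $s_0$ because $\kappa_{\mathcal{F}_0}(p_0)=0$. I expect to handle this by arguing separately, using hypothesis (v) and the fact that $\kappa_{\mathcal{F}_0}$ vanishes only at $p_0$, that the parallel correspondence $f(s)\mapsto g(t(s))$ extends continuously to $s=s_0$ with value $g(t_0)=p_0$; then continuity of $\kappa_{\mathcal{F}_0}$ forces the denominator in the formula for $t'(s)$ to tend to $0$ while the numerator tends to $\kappa_{\mathcal{F}_1}(p_1)>0$, which gives the required blow-up. Note that the excluded boundary values $\lambda=0,1$ correspond exactly to $\varrho=0,+\infty$, which are not attained by $t'$ at any interior point, so the conclusion is restricted precisely to $\lambda\in(0,1)$.
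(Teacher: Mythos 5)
Your argument is correct and is essentially the paper's own route: the paper offers no separate proof, merely declaring Proposition \ref{CorSingCurv1} to be the limiting version of Proposition \ref{ThmSingCurv1}, and you carry out exactly that limiting argument, with $\rho_{\min}=0$ and $\rho_{\max}=+\infty$ forcing $t'$ to sweep all of $(0,+\infty)$. Your explicit treatment of the degenerate endpoint $s_0$ (where the implicit function theorem fails because $\kappa_{\mathcal{F}_0}(p_0)=0$) supplies a detail the paper leaves implicit, and the only cosmetic slip --- whether the singular point is $\lambda a+(1-\lambda)b$ or $(1-\lambda)a+\lambda b$ --- is harmless since $\Eq_{\lambda}=\Eq_{1-\lambda}$ and $t'$ attains both $\varrho$ and $1/\varrho$.
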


\begin{prop}\label{CorSingCurv2}
Under the assumptions of Proposition \ref{CorSingCurv1} if we replace (iv) with
\begin{itemize}
\item[\textit{(iv)}] $\mathcal{F}_0, \mathcal{F}_1$ are curved in the same side at parallel pairs $a_0, a_1$ close to $p_0, p_1$ and $q_0, q_1$, respectively. 
\end{itemize}
Then the set $\Eq_{\lambda}(\mathcal{F}_0\cup\mathcal{F}_1)$ has a singular point for every $\lambda\in(-\infty,0)\cup(1,\infty)$.
\end{prop}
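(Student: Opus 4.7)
The plan is to mirror the proof of Proposition \ref{ThmSingCurv1} in the same-side configuration (as in Proposition \ref{PropSingCurv2}) and let the extreme curvatures degenerate to $0$ and $\infty$ at the two ends of the $\lambda$-range.

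First I would parameterize the arcs by arc length as $g\colon[t_0,t_1]\to\mathbb{R}^2$ and $f\colon[s_0,s_1]\to\mathbb{R}^2$ with $g(t_0)=p_0$, $g(t_1)=q_0$, $f(s_0)=p_1$, $f(s_1)=q_1$. Thanks to (ii), (iii), (iv) and the fact that $\kappa_{\mathcal{F}_0}$ is positive on $\mathcal{F}_0\setminus\{p_0\}$, the tangent direction along $\mathcal{F}_0$ is strictly monotone, so the parallel partner of each $a_1\in\mathcal{F}_1$ is unique; the resulting parallel-pair function $t\colon[s_0,s_1]\to[t_0,t_1]$ is continuous, smooth on $(s_0,s_1)$, and satisfies $t(s_0)=t_0$, $t(s_1)=t_1$. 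The same-side hypothesis forces the tangent (and hence principal-normal) vectors of $\mathcal{F}_0$ and $\mathcal{F}_1$ to agree at parallel points, so $f'(s)=g'(t(s))$, and differentiation yields
\[
t'(s)=\frac{\kappa_{\mathcal{F}_1}(f(s))}{\kappa_{\mathcal{F}_0}(g(t(s)))}\qquad\text{for }s\in(s_0,s_1).
\]

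Next I would parameterize $\Eq_{\lambda}(\mathcal{F}_0\cup\mathcal{F}_1)$ by $F(s)=\lambda f(s)+(1-\lambda)g(t(s))$ and use $g'(t(s))=f'(s)$ to obtain
\[
F'(s)=\bigl[\lambda+(1-\lambda)t'(s)\bigr]f'(s).
\]
Thus $F(s)$ is a singular point of $\Eq_{\lambda}(\mathcal{F}_0\cup\mathcal{F}_1)$ exactly when $t'(s)=\lambda/(\lambda-1)$. A quick check shows that as $\lambda$ ranges over $(-\infty,0)\cup(1,\infty)$, the quantity $\rho:=\lambda/(\lambda-1)$ ranges bijectively over $(0,1)\cup(1,\infty)$.

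The key step is then to show $t'$ attains every value in $(0,+\infty)$. By (iii), as $s\to s_0^+$ one has $f(s)\to p_1$ and $g(t(s))\to p_0$, so the numerator of $t'$ tends to $\kappa_{\mathcal{F}_1}(p_1)>0$ while the denominator tends to $\kappa_{\mathcal{F}_0}(p_0)=0$, forcing $t'(s)\to +\infty$. Symmetrically, as $s\to s_1^-$ the numerator tends to $\kappa_{\mathcal{F}_1}(q_1)=0$ while the denominator tends to $\kappa_{\mathcal{F}_0}(q_0)>0$, forcing $t'(s)\to 0^+$. Since $t'$ is continuous on $(s_0,s_1)$, the intermediate value theorem produces some $s\in(s_0,s_1)$ with $t'(s)=\rho$ for every prescribed $\rho\in(0,+\infty)$, hence a singular point of $\Eq_{\lambda}(\mathcal{F}_0\cup\mathcal{F}_1)$ for every $\lambda\in(-\infty,0)\cup(1,\infty)$.

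The step I expect to be the main obstacle is the rigorous justification of the endpoint behaviour of $t$ and $t'$: one must check that the parallel-pair map extends continuously up to $s_0$ and $s_1$ sending them to $t_0$ and $t_1$ respectively, even though the defining formula $t'=\kappa_{\mathcal{F}_1}/\kappa_{\mathcal{F}_0}$ blows up (or collapses) there. This follows from the monotone turning of the tangent line on $\mathcal{F}_0$, which keeps the parallel partner unique right up to the endpoints, together with continuity of the curvatures, but needs a little care before the intermediate value argument closes the proof.
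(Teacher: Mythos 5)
Your proof is correct and follows exactly the route the paper intends: it runs the argument of Proposition \ref{ThmSingCurv1} in the same-side configuration of Proposition \ref{PropSingCurv2}, where the singularity condition becomes $t'(s)=\lambda/(\lambda-1)$ and the degenerate curvatures at $p_0$ and $q_1$ force $t'$ to sweep out all of $(0,\infty)$, so the intermediate value theorem covers every $\lambda\in(-\infty,0)\cup(1,\infty)$. The paper gives no separate proof, stating only that this is the limiting version of Proposition \ref{PropSingCurv2}, and your write-up supplies precisely those limiting details.
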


The curvature of a curve at a point is not an affine invariant but the ratio of curvatures at parallel points is an affine invariant. Inflexion points are affine invariants and they are easy to detect. Thus Proposition \ref{CorSingCurv1} and Proposition \ref{CorSingCurv2} are useful to study singularities of affine $\lambda$--equidistants (see Figure \ref{FigTwoInflPts}). In Figure \ref{FigTwoInflPts}(i) and in Figure \ref{FigTwoInflPts}(ii) we present arcs which satisfy the assumptions of Propositions \ref{CorSingCurv1} and Proposition \ref{CorSingCurv2}, respectively. Notice that $p_0$ and $q_1$ are inflexion points. By Proposition \ref{CorSingCurv1} for all $\lambda\in (0,1)$ an affine $\lambda$--equidistant of a set $\mathcal{F}_0\cup\mathcal{F}_1$  in Figure \ref{FigTwoInflPts}(i)  is singular. Similarly, for all $\lambda\in(-\infty,0)\cup(1,\infty)$ an affine $\lambda$--equidistant of a set $\mathcal{F}_0\cup\mathcal{F}_1$ in Figure \ref{FigTwoInflPts}(ii) is singular. Furthermore, in Figure \ref{FigTwoInflPts}(iii) we present arcs $\mathcal{F}_1$, $\mathcal{F}_2$, $\mathcal{F}_3$, $\mathcal{F}_4$ of one curve such that:
\begin{itemize}
\item arcs $\mathcal{F}_1$, $\mathcal{F}_4$ and arcs $\mathcal{F}_2$, $\mathcal{F}_3$ satisfy the assumptions of Proposition \ref{CorSingCurv1}, 
\item arcs $\mathcal{F}_1$, $\mathcal{F}_3$ and arcs $\mathcal{F}_2$, $\mathcal{F}_4$ satisfy the assumptions of Proposition \ref{CorSingCurv2}, 
\end{itemize}
therefore for all $\lambda\neq 0, 1$ an affine $\lambda$--equidistant of a set $\mathcal{F}_1\cup\mathcal{F}_2\cup\mathcal{F}_3\cup\mathcal{F}_4$ is singular. In Figure \ref{FigTwoInflPts}(iv) we illustrate a curve from Figure \ref{FigTwoInflPts}(iii) together with an affine  $\displaystyle\frac{1}{5}$--equidistant of $\mathcal{F}_1\cup\mathcal{F}_2\cup\mathcal{F}_3\cup\mathcal{F}_4$.

\begin{figure}[h]
\centering
\includegraphics[scale=0.29]{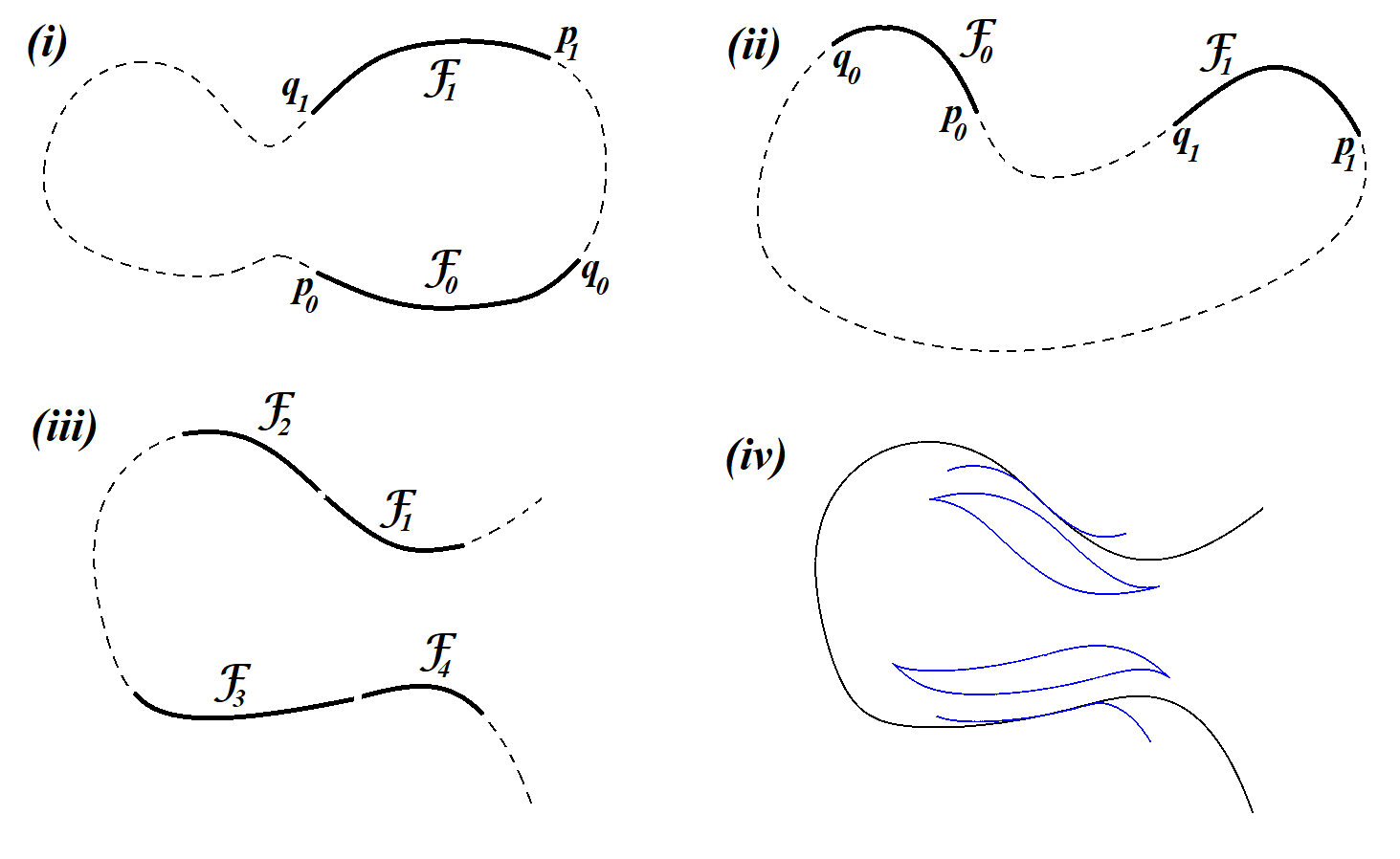}
\caption{}
\label{FigTwoInflPts}
\end{figure}

\begin{prop}\label{ThmSingPointsGeneralization1}
Let $\mathcal{F}_0$ and $\mathcal{F}_1$ be embedded regular curves with endpoints $p, q_0$ and $p, q_1$, respectively. Let $l_0$ be the line through $q_1$ parallel to $T_{p}\mathcal{F}_0$ and let $l_1$ be the line through $q_0$ parallel to $T_{p}\mathcal{F}_1$. Let $c=l_0\cap l_1$, $b_0=l_0\cap T_p\mathcal{F}_1$, $b_1=l_1\cap T_p\mathcal{F}_0$. Let us assume that
\begin{enumerate}[(i)]
\item $T_{p}\mathcal{F}_0 \| T_{q_1}\mathcal{F}_1$ and $T_{q_0}\mathcal{F}_0 \| T_{p}\mathcal{F}_1$,
\item the curvature of $\mathcal{F}_i$ for $i=0,1$ does not vanish at any point,
\item absolute values of rotation numbers of $\mathcal{F}_0$ and $\mathcal{F}_1$ are the same and smaller than $\displaystyle\frac{1}{2}$,
\item for every point $a_i$ in $\mathcal{F}_i$, there is exactly one point $a_{j}$ in $\mathcal{F}_{j}$ such that $a_i,a_{j}$ is a parallel pair for $i\neq j$,
\item $\mathcal{F}_0$, $\mathcal{F}_1$ are curved in the different sides at every parallel pair $a_0, a_1$ such that $a_i\in\mathcal{F}_i$ for $i=0,1$.
\end{enumerate}

Let $\rho_{\max}$ (resp. $\rho_{\min}$) be the maximum (resp. the minimum) of the set \linebreak $\displaystyle\left\{\frac{c-b_1}{q_1-b_1}, \frac{c-b_0}{q_0-b_0}\right\}.$

If $\rho_{\max}<1$ then the set $\Eq_{\lambda}(\mathcal{F}_0\cup\mathcal{F}_1)$ has a singular point for every \linebreak $\displaystyle\lambda\in\left[\frac{\rho_{\max}}{\rho_{\max}+1}, \frac{1}{\rho_{\max}+1}\right]$.

If $\rho_{\min}>1$ then the set $\Eq_{\lambda}(\mathcal{F}_0\cup\mathcal{F}_1)$ has a singular point for every \linebreak $\displaystyle\lambda\in\left[\frac{1}{\rho_{\min}+1}, \frac{\rho_{\min}}{\rho_{\min}+1}\right]$.

In particular, if $\rho_{\max}<1$ or $\rho_{\min}>1$ then the Wigner caustic of $\mathcal{F}_0\cup\mathcal{F}_1$ has a singular point.
\end{prop}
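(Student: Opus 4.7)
My plan is to adapt the strategy of Proposition \ref{ThmSingCurv1}. Parametrize $\mathcal{F}_1$ by arc length $f\colon[s_0,s_1]\to\mathbb{R}^2$ with $f(s_0)=p,\ f(s_1)=q_1$ and $\mathcal{F}_0$ by $g\colon[t_0,t_1]\to\mathbb{R}^2$ with $g(t_0)=p,\ g(t_1)=q_0$. Assumptions (ii) and (iv) together with the implicit function theorem produce a smooth function $t\colon[s_0,s_1]\to[t_0,t_1]$ such that $(f(s),g(t(s)))$ is the unique parallel pair attached to $s$; by (i) this map interchanges the endpoints, so $t$ is strictly monotone. Writing $\rho(s):=|t'(s)|=\kappa_{\mathcal{F}_1}(f(s))/\kappa_{\mathcal{F}_0}(g(t(s)))$, this is a continuous positive function on $[s_0,s_1]$, and by Proposition \ref{PropSingularPointOfEq} the set $\Eq_\lambda(\mathcal{F}_0\cup\mathcal{F}_1)$ has a singular point at one of the two $\lambda$-points of the chord from $f(s)$ to $g(t(s))$ whenever $\rho(s)\in\{\lambda/(1-\lambda),(1-\lambda)/\lambda\}$.

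The heart of the argument is to identify the boundary values
\[
\lim_{s\to s_0^+}\rho(s)=\frac{\kappa_{\mathcal{F}_1}(p)}{\kappa_{\mathcal{F}_0}(q_0)},\qquad\lim_{s\to s_1^-}\rho(s)=\frac{\kappa_{\mathcal{F}_1}(q_1)}{\kappa_{\mathcal{F}_0}(p)}
\]
with the parallelogram ratios built from $c,b_0,b_1,q_0,q_1$. My approach is a first-order computation: expand each arc at its endpoint using the prescribed tangent direction and endpoint curvature as data, and extract from the tangent rotation how fast the parallel partner must move. The expected outcome is that one of the two boundary values of $\rho$ coincides with the scalar ratio $\frac{c-b_0}{q_1-b_0}$ on the line $l_0$ (which carries the collinear triple $b_0,q_1,c$), while the other coincides with the reciprocal of the scalar ratio $\frac{c-b_1}{q_0-b_1}$ on $l_1$ (which carries $b_1,q_0,c$); these are the quantities in the statement once one reads the numerator and denominator along the natural collinear triples. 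Assumptions (iii) and (v) are what fix the signs and keep $\rho$ inside $(0,\infty)$ on $[s_0,s_1]$.

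Granting this identification, the conclusion follows by the intermediate value theorem. If $\rho_{\max}<1$, then one endpoint value of $\rho$ is at most $\rho_{\max}$ and the other is at least $1/\rho_{\max}$, so by continuity $\rho$ attains every value in $[\rho_{\max},1/\rho_{\max}]$. For any $\lambda\in[\rho_{\max}/(\rho_{\max}+1),1/(\rho_{\max}+1)]$ the number $\lambda/(1-\lambda)$ lies in $[\rho_{\max},1/\rho_{\max}]$, so $\rho(s_*)=\lambda/(1-\lambda)$ for some $s_*$, producing a singular point of $\Eq_\lambda(\mathcal{F}_0\cup\mathcal{F}_1)$ by Proposition \ref{PropSingularPointOfEq}. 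The case $\rho_{\min}>1$ is symmetric, and since $1/2$ lies in the relevant interval in either case, the Wigner caustic has a singular point. The one non-routine step is the geometric identification above: in Proposition \ref{ThmSingCurv1} the relevant ratios were given in the hypotheses, whereas here they must be read off from the tangent-line parallelogram at $p,q_0,q_1$, and this is where I expect the main obstacle to lie.
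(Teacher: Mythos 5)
Your reduction is the same as the paper's: parametrize both arcs by arc length, use (ii) and (iv) to obtain the smooth reparametrization $t$ with $f'(s)=-g'(t(s))$, note that $|t'(s)|$ equals the curvature ratio of the parallel pair, and invoke Proposition \ref{PropSingularPointOfEq} to reduce the claim to showing that $|t'|$ attains the value $\lambda/(1-\lambda)$. The gap is exactly the step you flag as the main obstacle, and it is not a computation that can be completed: the boundary values $\kappa_{\mathcal{F}_1}(p)/\kappa_{\mathcal{F}_0}(q_0)$ and $\kappa_{\mathcal{F}_1}(q_1)/\kappa_{\mathcal{F}_0}(p)$ are \emph{not} equal to the parallelogram ratios $\frac{c-b_0}{q_0-b_0}$, $\frac{c-b_1}{q_1-b_1}$ (nor to their reciprocals). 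The latter depend only on the endpoints and the four endpoint tangent lines, i.e.\ on zeroth and first order data, while the former are second order data; one can perturb $\mathcal{F}_0$ near $p$ so as to change $\kappa_{\mathcal{F}_0}(p)$ arbitrarily (keeping it positive and keeping (i)--(v)) without moving any of $c,b_0,b_1,q_0,q_1$. So no expansion at the endpoints yields the claimed identification, and an intermediate value argument on the endpoint values of $\rho$ cannot produce an interval of $\lambda$'s expressed through $\rho_{\max}$.

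The idea that is missing is that the parallelogram ratios are \emph{averages} of $t'$ over the whole arc, not its endpoint values. The paper normalizes the tangent parallelogram to the unit square by an affine map (equidistants are affine equivariant), so that $p\mapsto(0,0)$, $q_0\mapsto(1,f_2(\ell_0))$, $q_1\mapsto(g_1(0),1)$ with $0<f_2(\ell_0),\,g_1(0)\le\rho_{\max}$ and endpoint tangents along the coordinate axes. Integrating $f'(s)=-g'(t(s))$ against $t'(s)\,ds$ gives
\[
g_1(0)=\int_0^{\ell_0}t'(s)\,\frac{df_1}{ds}(s)\,ds,\qquad 1=\int_0^{\ell_0}t'(s)\,\frac{df_2}{ds}(s)\,ds,
\]
where $\frac{df_1}{ds},\frac{df_2}{ds}\ge 0$ (the tangent rotates monotonically through a single quadrant of directions by (ii) and (iii)), $\int_0^{\ell_0}\frac{df_1}{ds}\,ds=1$ and $\int_0^{\ell_0}\frac{df_2}{ds}\,ds=f_2(\ell_0)\le\rho_{\max}$. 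If $t'>\varrho$ everywhere, the first identity gives $g_1(0)>\varrho\ge\rho_{\max}$, a contradiction; if $t'<\varrho$ everywhere, the second gives $1<\varrho f_2(\ell_0)\le\varrho^2\le 1$, again a contradiction. Hence $t'$ attains every $\varrho\in[\rho_{\max},1]$, which yields the stated interval of $\lambda$ (and $\lambda=\frac{1}{2}$ in particular). Your closing intermediate-value step is fine once this averaging argument replaces the endpoint identification, but as written the proof does not go through.
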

\begin{proof}
Let us consider the case $\rho_{\max}<1$, the proof of the case $\rho_{\min}>1$ is similar.

Since $\Eq_{\lambda}(\mathcal{F}_0\cup\mathcal{F}_1)=\Eq_{1-\lambda}(\mathcal{F}_0\cup\mathcal{F}_1)$, to finish the proof it is enough to show that $\Eq_{\lambda}(\mathcal{F}_0\cup\mathcal{F}_1)$ has a singular point for $\displaystyle\lambda\in\left[\frac{\rho_{\max}}{1+\rho_{\max}}, \frac{1}{2}\right]$.

Let us fix $\lambda$ in the interval $\displaystyle\left[\frac{\rho_{\max}}{1+\rho_{\max}}, \frac{1}{2}\right]$.

We can transform the parallelogram bounded by $T_{p}\mathcal{F}_0, T_{q_0}\mathcal{F}_0, T_{p}\mathcal{F}_1, T_{q_1}\mathcal{F}_1$ to the unit square by an affine transformation $A:\mathbb{R}^2\to\mathbb{R}^2$ (see Figure \ref{PictureThmParallelogramGeneralization}). Since any affine $\lambda$--equidistant is affine equivariant, the sets $\Eq_{\lambda}\Big(A(\mathcal{F}_0\cup\mathcal{F}_1)\Big)$ and $A\Big(\Eq_{\lambda}(\mathcal{F}_0\cup\mathcal{F}_1)\Big)$ coincide. Let us consider the coordinate system described in Figure \ref{PictureThmParallelogramGeneralization}(ii).

Let $\ell_i$ be the length of $\mathcal{F}'_i$ for $i=0,1$. Let $[0,\ell_0]\ni s\mapsto f(s)=\left(f_1(s),f_2(s)\right)$ and $[0,\ell_1]\ni t\to g(t)=\left(g_1(t),g_2(t)\right)$ be arc length parameterizations of $\mathcal{F}'_0=A(\mathcal{F}_0)$ and $\mathcal{F}'_1=A(\mathcal{F}_1)$, respectively, such that:
\begin{enumerate}[(a)]
\item $f(0)=g(\ell_1)=(0,0)$, $g(0)=(g_1(0),1)$, where $0<g_1(0)\leqslant \rho_{\max}$ and \linebreak $f(\ell_0)=(1,f_2(\ell_0))$, where $0<f_2(\ell_0)\leqslant \rho_{\max}$,
\item $\displaystyle\frac{df}{ds}(0)=[1,0], \frac{df}{ds}(\ell_0)=[0,1], \frac{dg}{dt}(0)=[-1,0], \frac{dg}{dt}(\ell_1)=[0,-1]$.
\end{enumerate}

\begin{figure}[h]
\centering
\includegraphics[scale=0.33]{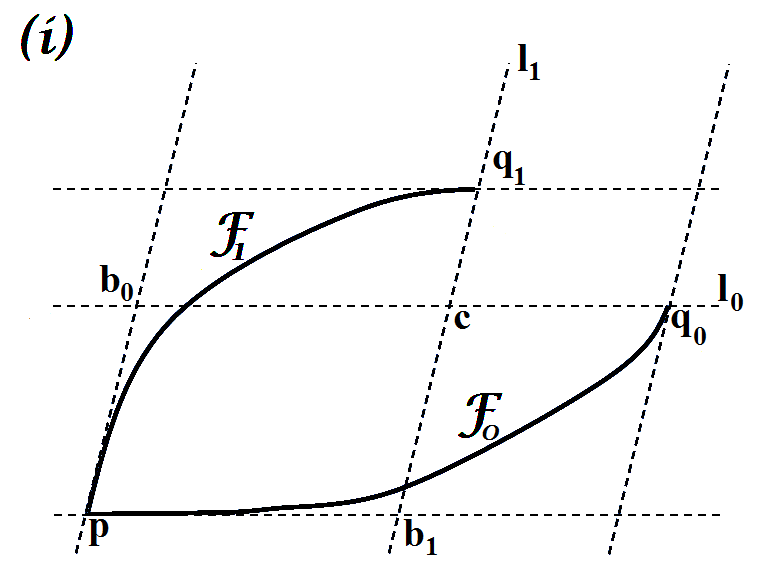}
\includegraphics[scale=0.17]{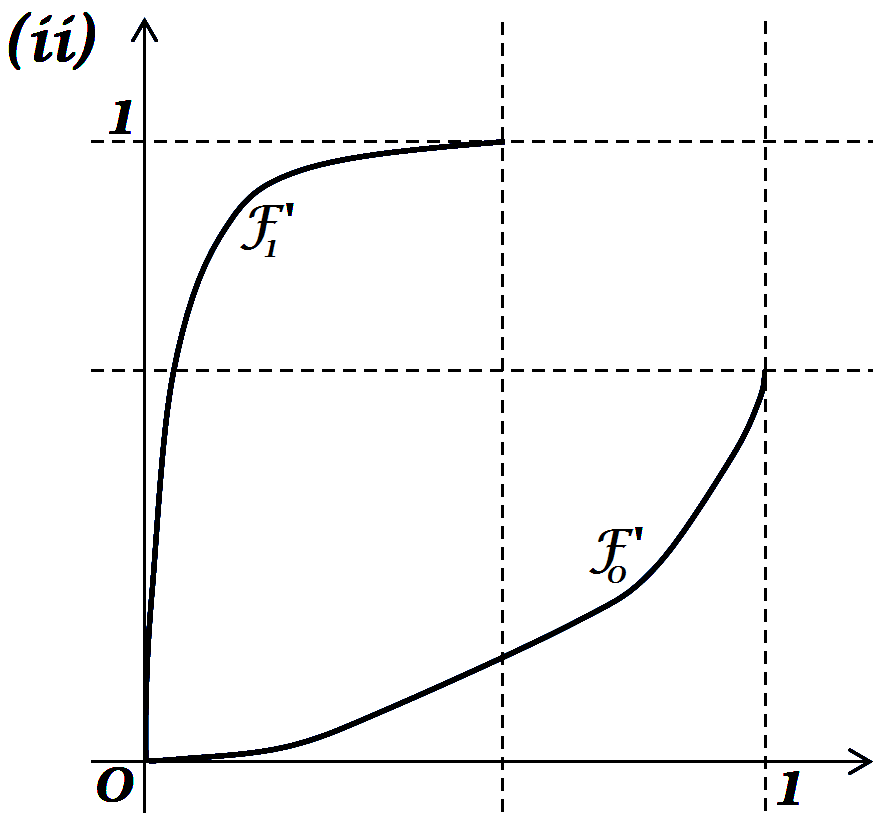}
\caption{}
\label{PictureThmParallelogramGeneralization}
\end{figure}

From the implicit function theorem there exists a smooth function $t:[0,\ell_0]\to [0,\ell_1]$ such that  
\begin{align}
\label{EqParallel}
\frac{df}{ds}(s)=-\frac{dg}{dt}(t(s)).
\end{align}
It means that $f(s),g(t(s))$ is a parallel pair.

Let $\displaystyle\varrho=\frac{\lambda}{1-\lambda}$. Then $\varrho\in[\rho_{\max}, 1]$. By Proposition \ref{PropSingularPointOfEq} the set $\Eq_{\lambda}(\mathcal{F}'_0\cup\mathcal{F}'_1)$ is singular if $\displaystyle\frac{\kappa_{\mathcal{F}'_0}(f(s))}{\kappa_{\mathcal{F}'_1}(g(t(s)))}=\varrho$ for some $s\in[0,\ell_0]$.

By (\ref{EqParallel}) we get that $\displaystyle t'(s)=\frac{\kappa_{\mathcal{F}'_0}(f(s))}{\kappa_{\mathcal{F}'_1}(g(t(s)))}$. We will show that $t'(s)=\varrho$ for some $s\in[0,\ell_0]$. Let us assume that $t'(s)\neq \varrho$ for all $s\in[0,\ell_0]$.

By (\ref{EqParallel}) we get that
\begin{align}\label{EquationGeGo}
g(\ell_1)-g(0)=\int_0^{\ell_1}\frac{dg}{dt}(t)dt=\int_0^{\ell_0}t'(s)\frac{dg}{dt}(t(s))ds=-\int_0^{\ell_0}t'(s)\frac{df}{ds}(s)ds.
\end{align}

Let us assume that $\displaystyle t'(s)>\varrho$ for all $s\in[0,\ell_0]$. 

At the first component of (\ref{EquationGeGo}) we have 
\begin{align*}
g_1(0)=\int_0^{\ell_0}t'(s)\frac{df_1}{ds}(s)ds>\int_0^{\ell_0}\varrho\cdot \frac{df_1}{ds}(s)ds=\varrho.
\end{align*}
Then $g_1(0)>\varrho$ which is impossible, since $g_1(0)\le \rho_{\max}$ and $\varrho\in[\rho_{\max}, 1]$.

If we assume that $\displaystyle t'(s)<\varrho$ for all $s\in[0,\ell_0]$, then in a similar way, we get $1<\varrho^2$, which also is impossible, since $\varrho\in[\rho_{\max}, 1]$ and $\rho_{\max}>0$.

Therefore there exists $s\in[0,\ell_0]$ such that $t'(s)=\varrho$ which ends the proof.

\end{proof}

\begin{figure}[h]
\centering
\includegraphics[scale=0.30]{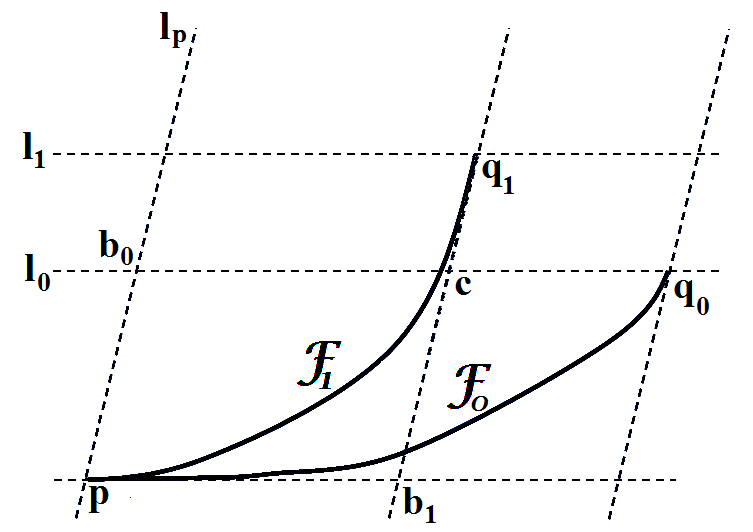}
\caption{}
\label{PictureThmParallelogramGeneralization2}
\end{figure}

In the similar way we get the two following propositions.

\begin{prop}\label{PropSingPointsGeneralization2}
Let $\mathcal{F}_0$ and $\mathcal{F}_1$ be embedded regular curves with endpoints $p, q_0$ and $p, q_1$, respectively. Let $l_0$ be the line through $q_0$ parallel to $T_{p}\mathcal{F}_0$, $l_1$ be the line through $q_1$ parallel to $T_{p}\mathcal{F}_1$ and $l_p$ be the line through $p$ parallel to $T_{q_0}\mathcal{F}_0$. Let $c=l_0\cap T_{q_1}\mathcal{F}_1$, $b_0=l_0\cap l_p$ and $b_1=T_p\mathcal{F}_1\cap T_{q_1}\mathcal{F}_1$. Let us assume that
\begin{enumerate}[(i)]
\item $T_{p}\mathcal{F}_0 = T_{p}\mathcal{F}_1$ and $T_{q_0}\mathcal{F}_0 \| T_{q_1}\mathcal{F}_1$,
\item the curvature of $\mathcal{F}_i$ for $i=0,1$ does not vanish at any point,
\item absolute values of rotation numbers of $\mathcal{F}_0$ and $\mathcal{F}_1$ are the same and smaller than $\displaystyle\frac{1}{2}$,
\item for every point $a_i$ in $\mathcal{F}_i$, there is exactly one point $a_{j}$ in $\mathcal{F}_{j}$ such that $a_i,a_{j}$ is a parallel pair for $i\neq j$,
\item $\mathcal{F}_0$, $\mathcal{F}_1$ are curved in the same side at every parallel pair $a_0, a_1$ such that $a_i\in\mathcal{F}_i$ for $i=0,1$.
\end{enumerate}

Let $\rho_{\max}$ (resp. $\rho_{\min}$) be the maximum (resp. the minimum) of the set \linebreak $\displaystyle\left\{\frac{c-b_1}{q_1-b_1}, \frac{c-b_0}{q_0-b_0}\right\}.$

If $\rho_{\max}<1$ then the set $\Eq_{\lambda}(\mathcal{F}_0\cup\mathcal{F}_1)$ has a singular point for every \linebreak $\displaystyle\lambda\in\left(-\infty, -\frac{\rho_{\max}}{1-\rho_{\max}}\right]\cup\left[\frac{1}{1-\rho_{\max}},\infty\right)$.

If $\rho_{\min}>1$ then the set $\Eq_{\lambda}(\mathcal{F}_0\cup\mathcal{F}_1)$ has a singular point for every \linebreak $\displaystyle\lambda\in\left(-\infty, \frac{1}{1-\rho_{\min}}\right]\cup\left[-\frac{\rho_{\min}}{1-\rho_{\min}},\infty\right)$.
\end{prop}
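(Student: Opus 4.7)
The plan is to mimic the proof of Proposition~\ref{ThmSingPointsGeneralization1}, adapting it to the new configuration (shared tangent at $p$, parallel tangents at $q_0, q_1$, same-side curving). The core reduction is to show that the singularity criterion becomes $t'(s) = \mu$ for a certain $\mu = \mu(\lambda)$, and then produce such an $s$ by an integral mean-value argument.

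First I would apply an affine transformation $A$ sending $p \mapsto (0,0)$, $T_p\mathcal{F}_0 = T_p\mathcal{F}_1 \mapsto \{y=0\}$, $T_{q_0}\mathcal{F}_0 \mapsto \{x=1\}$, and (using a residual $y$-scaling) the horizontal line through $q_1$ to $\{y=1\}$, so that $q_0 = (1, y_{q_0})$ and $q_1 = (x_{q_1},1)$. Computing $c, b_0, b_1$ from the statement in these coordinates yields $\rho_{\max} = \max(x_{q_1}, y_{q_0})$ and $\rho_{\min} = \min(x_{q_1}, y_{q_0})$. Arc-length parametrize $A(\mathcal{F}_i)$ by $f\colon [0,\ell_0]\to\mathbb{R}^2$ and $g\colon [0,\ell_1]\to\mathbb{R}^2$ with positive curvature, $f'(0)=g'(0)=[1,0]$ and $f'(\ell_0)=g'(\ell_1)=[0,1]$. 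By (iii) and (v) one can orient the parametrizations so that at every parallel pair the tangent vectors coincide; hypothesis (iv) with the implicit function theorem then produces a smooth $t\colon [0,\ell_0]\to[0,\ell_1]$ with $f'(s) = g'(t(s))$.

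The equidistant is parametrized locally by $s\mapsto \lambda f(s) + (1-\lambda)g(t(s))$, whose derivative $f'(s)\bigl(\lambda + (1-\lambda)t'(s)\bigr)$ vanishes iff $t'(s) = \mu := \lambda/(\lambda-1)$. The Frenet relations applied to $f'(s)=g'(t(s))$ give $t'(s) = \kappa_f(s)/\kappa_g(t(s)) > 0$, so only $\lambda\notin[0,1]$ is admissible, consistent with Proposition~\ref{PropRegularPointOfEq}(i). A direct computation at the endpoints shows that, as $\lambda$ ranges over the two claimed half-lines, $\mu$ fills exactly $[\rho_{\max},1/\rho_{\max}]$ (when $\rho_{\max}<1$) or $[1/\rho_{\min},\rho_{\min}]$ (when $\rho_{\min}>1$). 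Fix such a $\mu$ and suppose $t'(s)\neq\mu$ for all $s$; by continuity, either $t'>\mu$ throughout $[0,\ell_0]$ or $t'<\mu$ throughout. Change of variables together with $f'(s)=g'(t(s))$ gives
\begin{align*}
(x_{q_1},\,1) = g(\ell_1)-g(0) = \int_0^{\ell_0} f'(s)\,t'(s)\,ds, \qquad (1,\,y_{q_0}) = f(\ell_0)-f(0) = \int_0^{\ell_0} f'(s)\,ds.
\end{align*}
Because the tangent of $f$ rotates monotonically through the first quadrant by (ii)--(iii), both $f_1'$ and $f_2'$ are non-negative and not identically zero. Comparing first (resp.\ second) components, $t'>\mu$ forces $x_{q_1}>\mu$ (resp.\ $1>\mu y_{q_0}$), which contradicts $\mu\geq\rho_{\max}\geq x_{q_1}$ (resp.\ $\mu\leq 1/\rho_{\max}\leq 1/y_{q_0}$) in the case $\rho_{\max}<1$; the case $\rho_{\min}>1$ is analogous with the roles of the two inequalities swapped. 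Hence $t'(s_0)=\mu$ for some $s_0$, giving the desired singular point.

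The main obstacle is the sign bookkeeping forced by same-side curving: unlike in Proposition~\ref{ThmSingPointsGeneralization1}, where opposite-side curving gives $f'(s)=-g'(t(s))$, here one has $f'(s)=+g'(t(s))$ at parallel pairs, which changes the formula for $\mu$ and pushes the admissible $\lambda$ outside $(0,1)$. A secondary subtlety is verifying that the residual affine freedom is exactly enough to identify $\rho_{\max}, \rho_{\min}$ with $\max(x_{q_1},y_{q_0})$ and $\min(x_{q_1},y_{q_0})$, and that the monotone rotation of the tangent through the first quadrant keeps both $f_1',f_2'$ nonnegative, so that the integral inequalities give sharp contradictions in both components.
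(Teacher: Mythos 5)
Your argument is essentially the paper's: the authors give no separate proof of this proposition, saying only that it follows ``in the similar way'' from Proposition~\ref{ThmSingPointsGeneralization1}, and your adaptation --- the affine normalization, the aligned parametrization $f'(s)=g'(t(s))$ forced by same-side curving (so that both curvatures may be taken positive and $t'>0$), the singularity criterion $t'(s)=\lambda/(\lambda-1)$, the identification of $\rho_{\max},\rho_{\min}$ with $\max(x_{q_1},y_{q_0}),\min(x_{q_1},y_{q_0})$, and the two-component integral contradiction --- is exactly that intended adaptation and is correct. One sentence needs repair: under $t'>\mu$ the second component gives $1>\mu y_{q_0}$, i.e.\ $\mu<1/y_{q_0}$, which is \emph{consistent} with $\mu\le 1/\rho_{\max}\le 1/y_{q_0}$ and yields no contradiction; the correct pairing (when $\rho_{\max}<1$) is that $t'>\mu$ is refuted by the first component ($x_{q_1}>\mu$ against $\mu\ge\rho_{\max}\ge x_{q_1}$) while $t'<\mu$ is refuted by the second ($1<\mu y_{q_0}$ against $\mu\le 1/\rho_{\max}\le 1/y_{q_0}$), with the roles of the two components exchanged in the case $\rho_{\min}>1$.
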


Arcs satisfying assumptions of Proposition \ref{PropSingPointsGeneralization2} are illustrated in Figure \ref{PictureThmParallelogramGeneralization2}.

\begin{prop}\label{PropSingPointsGeneralization3}
Let $\mathcal{F}_0$, $\mathcal{F}_1$ be embedded curves with endpoints $p, q_0$ and $p, q_1$, respectively. Let $l$ be the line through $p$ and $q_1$ and let $c=l\cap T_{q_0}\mathcal{F}_0$. Let us assume that
\begin{enumerate}[(i)]
\item $T_{p}\mathcal{F}_0\| T_{q_0}\mathcal{F}_0\| T_{p}\mathcal{F}_1\| T_{q_1}\mathcal{F}_1$,
\item the curvature of $\mathcal{F}_i$ does not vanish for $i=0,1$,
\item the absolute value of the rotation number of $\mathcal{F}_i$ is equal to $\displaystyle\frac{1}{2}$ for $i=0,1$. 
\end{enumerate}

Let $\displaystyle\rho=\left|\frac{q_1-p}{c-p}\right|$. If $\mathcal{F}_0$, $\mathcal{F}_1$ are curved in the different sides at every parallel pair $a_0, a_1$ such that $a_0\in\mathcal{F}_0$, $a_1\in\mathcal{F}_1$ then the set $\Eq_{\lambda}(\mathcal{F}\cup\mathcal{G})$ has a singular point for $\displaystyle\lambda\in\left\{\frac{1}{1+\rho}, \frac{\rho}{1+\rho}\right\}$.

If $\rho\neq 1$ and $\mathcal{F}_0$, $\mathcal{F}_1$ are curved in the same side at every parallel pair $a_0, a_1$ such that $a_0\in\mathcal{F}_0$, $a_1\in\mathcal{F}_1$ then the set $\Eq_{\lambda}(\mathcal{F}_0\cup\mathcal{F}_1)$ has a singular point for \linebreak $\displaystyle\lambda\in\left\{\frac{-1}{\rho-1},\frac{\rho}{\rho-1}\right\}$.
\end{prop}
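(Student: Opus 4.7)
The plan is to follow the template of the proofs of Propositions \ref{ThmSingPointsGeneralization1} and \ref{PropSingPointsGeneralization2}, specialised to the borderline setting where the rotation numbers equal exactly $\frac{1}{2}$. Since the curvature of each $\mathcal{F}_i$ does not vanish and its tangent direction rotates by $\pi$, the tangent map sweeps a full semicircle of directions monotonically, so combined with (i) every point of $\mathcal{F}_1$ has a unique parallel partner on $\mathcal{F}_0$. Writing $f : [0,\ell_0] \to \mathbb{R}^2$ and $g : [0,\ell_1] \to \mathbb{R}^2$ for arc-length parameterizations of $\mathcal{F}_0$ and $\mathcal{F}_1$ both starting at $p$, the implicit function theorem produces a smooth monotone bijection $t : [0,\ell_0] \to [0,\ell_1]$ with
$$f'(s) = \epsilon \cdot g'(t(s)),$$
where $\epsilon = -1$ in the different-sides case and $\epsilon = +1$ in the same-sides case; differentiating yields $t'(s) = \kappa_{\mathcal{F}_0}(f(s))/\kappa_{\mathcal{F}_1}(g(t(s))) > 0$.

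Next, by (i) and (iii) we may choose Euclidean coordinates with $p = 0$ and all four endpoint tangent lines horizontal, and assume both rotation numbers equal $+\frac{1}{2}$ (otherwise reflect). Then the angle $\arg f'(s)$ rises monotonically from $0$ to $\pi$, so $f_2'(s) = \sin(\arg f'(s)) \geq 0$ on $[0, \ell_0]$, and the substitution $t = t(s)$ gives
\begin{align*}
(q_0)_2 &= \int_0^{\ell_0} f_2'(s)\, ds, \\
(q_1)_2 &= \int_0^{\ell_1} g_2'(t)\, dt = \epsilon \int_0^{\ell_0} f_2'(s)\, t'(s)\, ds.
\end{align*}
Since $c$ is the intersection of the line through $p$ and $q_1$ with the horizontal line $T_{q_0}\mathcal{F}_0 = \{y = (q_0)_2\}$, similar triangles give $\rho = |q_1 - p|/|c - p| = |(q_1)_2/(q_0)_2|$, and hence
$$\rho = \frac{\int_0^{\ell_0} f_2'(s)\, t'(s)\, ds}{\int_0^{\ell_0} f_2'(s)\, ds},$$
which is a weighted average of $t'$ with non-negative weight $f_2'(s)\, ds$. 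The intermediate value theorem then produces $s_0 \in [0, \ell_0]$ with $t'(s_0) = \rho$.

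To conclude, apply Proposition \ref{PropSingularPointOfEq} at the parallel pair $(f(s_0), g(t(s_0)))$. In the different-sides case, $f' = -g' \circ t$ means the two arcs are parameterized in opposite directions there, so Proposition \ref{PropSingularPointOfEq} applies directly with $\lambda/(1-\lambda) = t'(s_0) = \rho$, producing a singular point of $\Eq_\lambda(\mathcal{F}_0 \cup \mathcal{F}_1)$ at $\lambda = \rho/(1+\rho)$. In the same-sides case, $f' = g' \circ t$, so one must first reverse the orientation of $\mathcal{F}_1$; this flips the sign of its signed curvature, and the singularity condition becomes $\lambda/(1-\lambda) = -\rho$, i.e.\ $\lambda = \rho/(\rho - 1)$. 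The involution $\Eq_\lambda = \Eq_{1-\lambda}$ supplies the other stated value of $\lambda$ in each case. The main obstacle is the careful sign bookkeeping in the same-sides case, where one must verify that reversing orientation flips the signed curvature, which in turn forces the singular $\lambda$ to lie outside $[0, 1]$, consistent with Proposition \ref{PropRegularPointOfEq}.
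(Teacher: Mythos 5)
Your proposal is correct and is essentially the approach the paper intends: the paper gives no separate proof of this proposition (it is stated as following ``in the similar way'' from the proof of Proposition \ref{ThmSingPointsGeneralization1}), and your argument is precisely that adaptation --- the parallel-pair correspondence $t(s)$ with $t'=\kappa_{\mathcal{F}_0}/\kappa_{\mathcal{F}_1}$, an integral identity relating the endpoint data to $t'$, the intermediate value theorem, and Proposition \ref{PropSingularPointOfEq}. Your packaging of $\rho$ as a weighted average of $t'$ (in place of the paper's two-sided contradiction with $t'>\varrho$ and $t'<\varrho$) is a clean equivalent, and the orientation/sign bookkeeping you flag is real but routine, since reversing the parameterization of $\mathcal{F}_1$ when needed only permutes the two stated values of $\lambda$ via $\Eq_{\lambda}=\Eq_{1-\lambda}$.
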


\begin{figure}[h]
\centering
\includegraphics[scale=0.27]{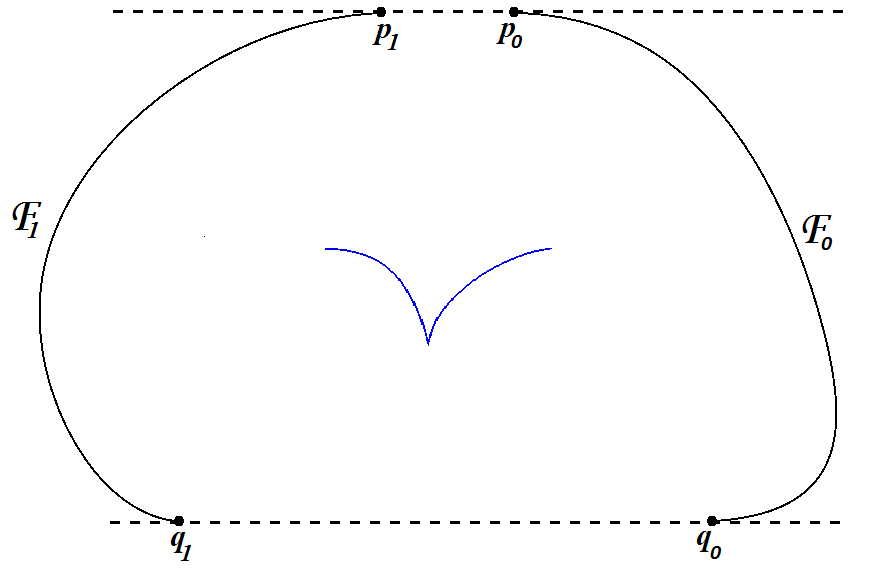}
\caption{Arcs $\mathcal{F}_0$, $\mathcal{F}_1$, the Wigner caustic of $\mathcal{F}_0\cup\mathcal{F}_1$ and the tangent lines to $\mathcal{F}_0$ and to $\mathcal{F}_1$ (the dashed ones).}
\label{PictureNonConvexLoops}
\end{figure}

\begin{rem}\label{CorGeneralizationsSingularPoints}
Let $\M$ be a smooth closed regular curve. If there exists $p$ in $\mathbb{R}^2$ and arcs $\mathcal{F},\mathcal{G}$ of $\M$ such that $\mathcal{F},\tau_p\left(\mathcal{G}\right)$ fulfill the assumptions of one of Proposition \ref{ThmSingPointsGeneralization1}, Proposition \ref{PropSingPointsGeneralization2} or Proposition \ref{PropSingPointsGeneralization3}, where $\tau_{p}$ is the translation by $p$, then the affine $\lambda$--equidistant of $\M$ has a singular point for $\lambda$ described in the above propositions. 
\end{rem}

Let us notice that curves $\mathcal{F}_0$ and $\tau_{p_0-p_1}\left(\mathcal{F}_1\right)$ in Figure \ref{PictureNonConvexLoops} satisfy Proposition \ref{PropSingPointsGeneralization3}. In that case by Remark \ref{CorGeneralizationsSingularPoints} the Wigner caustic of $\mathcal{F}_0\cup\mathcal{F}_1$ has a singular point.

Since the curve presented in Figure \ref{PictureRosettesLoops}(i) has exactly three convex and three non--convex loops (see Figure \ref{FigLoopsCC1}),   its Wigner caustic has at least $6$ singular points by Theorem \ref{CorWCLoop}. The curve presented in Figure \ref{PictureRosettesLoops}(ii) has exactly three convex loops (see \ref{FigLoopsCC2}(i)) so its Wigner caustic has at least $3$ singular points. Despite the fact that this curve has no non--convex loops we can apply Proposition \ref{PropSingPointsGeneralization3} and get that the Wigner caustic of this curve has at least $3$ extra singular points (see Figure \ref{FigLoopsCC2}(ii)). Hence the Wigner caustic of this curve has at least 6 singular points. 

\begin{figure}[h]
\centering
\includegraphics[scale=0.15]{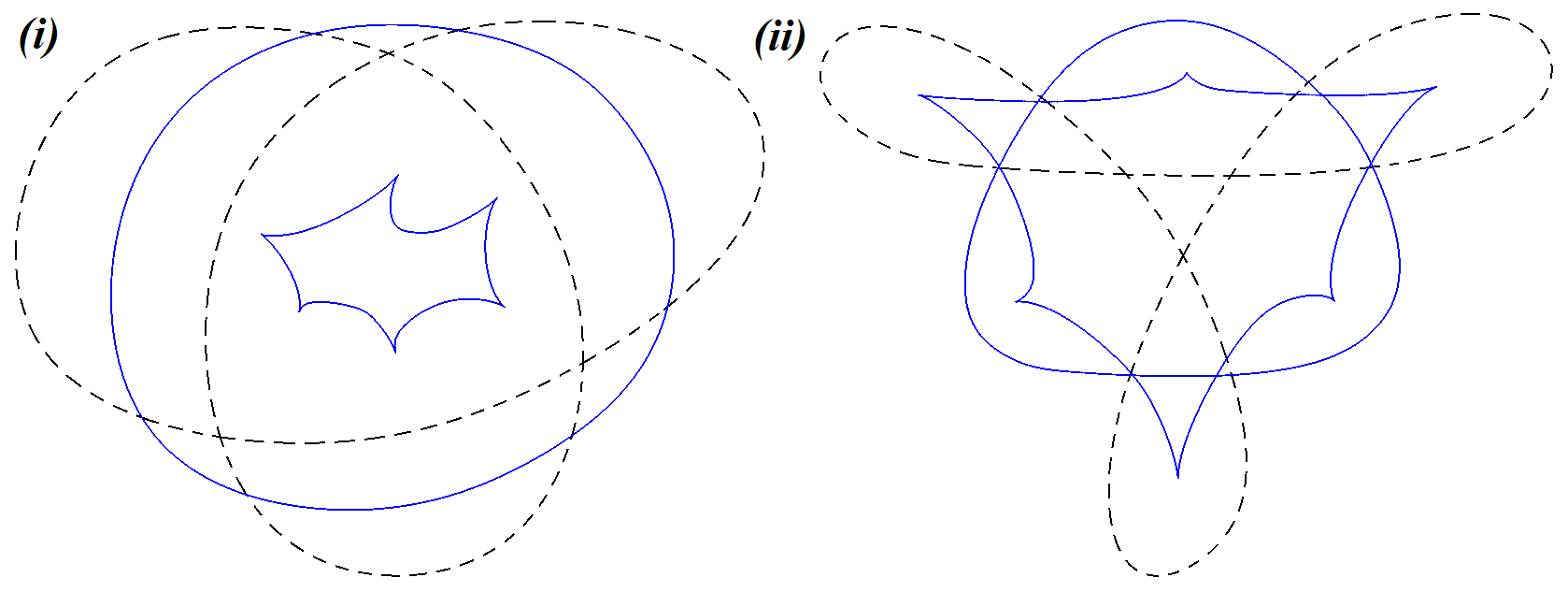}
\caption{A smooth regular curves with non--vanishing curvature (the dashed lines) and their Wigner caustics.}
\label{PictureRosettesLoops}
\end{figure}

\begin{figure}[h]
\centering
\includegraphics[scale=0.15]{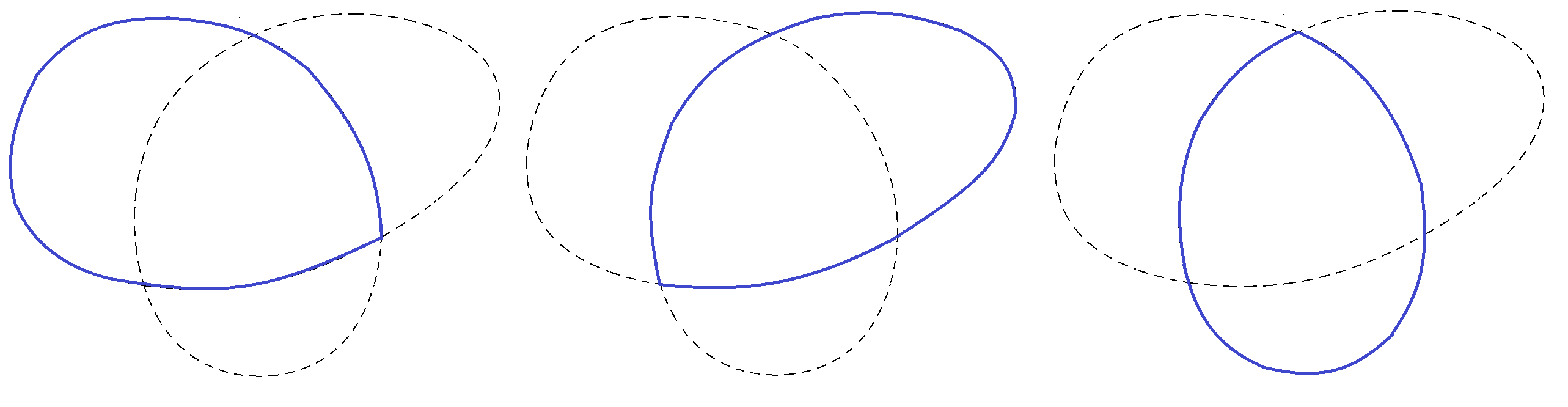}
\caption{Convex (the continuous line) and non-convex (the dased line) loops of a curve presented in Figure \ref{PictureRosettesLoops}(i).}
\label{FigLoopsCC1}
\end{figure}

\begin{figure}[h]
\centering
\includegraphics[scale=0.17]{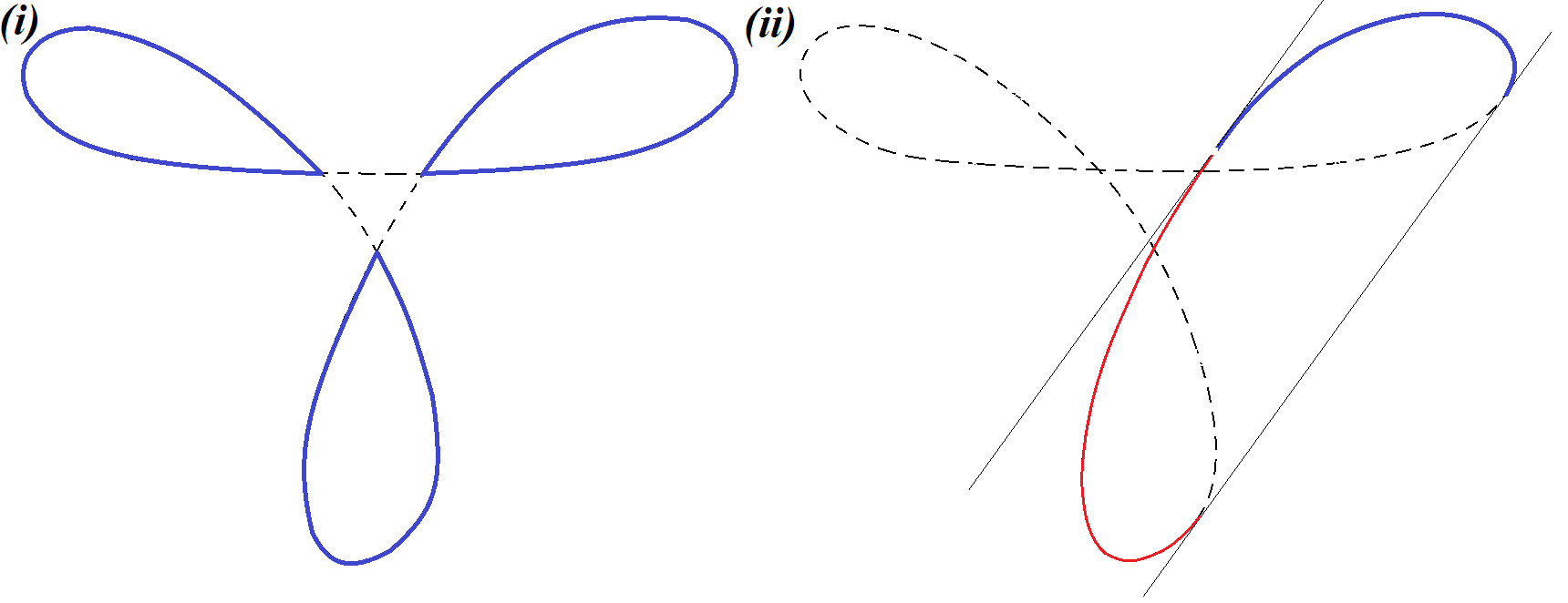}
\caption{(i) Convex loops (the continuous line) of a curve presented in Figure \ref{PictureRosettesLoops}(ii), (ii) arcs of a curve (the continuous lines) which satisfy the assumptions of Proposition \ref{PropSingPointsGeneralization3}.}
\label{FigLoopsCC2}
\end{figure}

\bibliographystyle{amsalpha}

\end{document}